\def\cal{\EuScript}
\def\mathscr{\mathcal}
\newtheorem{proposition}{Proposition}
\newtheorem{theorem}{Theorem}
\newtheorem{corollary}{Corollary}
\newdefinition{example}{Example}
\newproof{proof}{Proof}
\def\eps{\varepsilon}
\def\wh#1{\widehat{#1}}
\def\qed{\rule{1.5mm}{1.5mm}}
\active \gdef@{\mkern1mu}} % See page 641 of Knuth's "Digital Typography"
\def\C{\mathbbm{C}}
\def\R{\mathbbm{R}}
\def\dop{{\rm d}}
\def\eop{{\rm e}}
\def\Rnn{\R^{n\times n}}
\def\Cn{\C^n}
\def\Ckk{\C^{k\times k}}
\def\Cnk{\C^{n\times k}}
\def\Cnn{\C^{n\times n}}
\def\CNN{\C^{N\times N}\kern-2pt}
  \def\BA{{\bf A}} 
\def\Bb{{\bf b}}  \def\BB{{\bf B}} 
\def\Bc{{\bf c}}  \def\BC{{\bf C}} 
  \def\BD{{\bf D}} 
\def\Be{{\bf e}}  \def\BE{{\bf E}}
  \def\BH{{\bf H}} 
  \def\BI{{\bf I}} 
   \def\CK{{\cal K}}
  \def\BT{{\bf T}} 
\def\Bu{{\bf u}}   
\def\Bv{{\bf v}}  \def\BV{{\bf V}} \def\CV{{\cal V}}
\def\Bw{{\bf w}}  \def\BW{{\bf W}} \def\CW{{\cal W}}
\def\Bx{{\bf x}}   
\def\By{{\bf y}}
\def\Bzero{{\bf 0}}
\def\BAs{\BA{\kern-1pt}}      % version of bold A for use in A^k
\def\BVs{\BV{\kern-1pt}}      % version of bold V for use in V_k
\def\BWs{\BW{\kern-1pt}}      % version of bold W for use in W_k
\def\xhat{\widehat{\Bx}}
\def\yhat{\widehat{y}}
\def\VAV{\BV^*\kern-1.5pt\BA\BV}
\def\WAV{\BW^*\kern-1.5pt\BA\BV}
\def\VkAVk{\BV^*_{\kern-2pt k}\kern-1pt\BA\BV_{\kern-2pt k}^{}}
\def\WkAVk{\BW^{@*}_{\kern-2pt k}\kern-.5pt\BA\BV_{\kern-2pt k}^{}}
\title{Unstable modes in projection-based reduced-order models: \\
How many can there be, and what do they tell you?}
\author{Mark Embree\fnref{fn1}}
\ead{embree@vt.edu}
\address{Department of Mathematics and
Computational Modeling and Data Analytics Division,
Academy of Integrated Science\\
Virginia Tech, Blacksburg, VA 24061, USA}
\begin{document}

\begin{abstract}
Projection methods provide an appealing way to construct reduced-order
models of large-scale linear dynamical systems: 
they are intuitively motivated and fairly easy to compute.
Unfortunately, the resulting reduced models need not inherit the stability
of the original system.
How many unstable modes can these reduced models have?
This note investigates this question, using theory originally motivated by
iterative methods for linear algebraic systems and eigenvalue problems,
and illustrating the theory with a number of small examples.
From these results follow rigorous upper bounds on the number of unstable
modes in reduced models generated via orthogonal projection, 
for both continuous- and discrete-time systems.
Can anything be learned from the unstable modes in reduced-order models?
Several examples illustrate how such instability can helpfully signal transient 
growth in the original system.
\end{abstract}
\begin{keyword}
Moment-matching model reduction \sep Arnoldi \sep Bi-Lanczos \sep POD \sep numerical range \sep pseudospectra
\end{keyword}

\maketitle

%%%%%%%%%%%%%%%%%%%%%%%%%%%%%%%%%%%%%%%%%%%%%%%%%%%%%%%%%%%%%%%%%%%%%%%%%%%%%%%%
\section{Introduction}
%%%%%%%%%%%%%%%%%%%%%%%%%%%%%%%%%%%%%%%%%%%%%%%%%%%%%%%%%%%%%%%%%%%%%%%%%%%%%%%%
Reduced-order models are an enabling technology for simulation and design.
One seeks simple low-order models that mimic the dynamics of a system with a 
high-dimensional state space.
Asymptotic stability is the most fundamental property the reduced system should capture,
but several popular algorithms can construct unstable re\-duc\-ed-order models for stable systems.

In this note, 
we investigate the potential instability of
reduced-order models (ROMs) derived from projection methods.
For simplicity of presentation, consider
the standard con\-tin\-uous-time, 
single-input, single-output (SISO) linear system
\begin{eqnarray}
\dot{\Bx}(t) &\!\!\!=\!\!\!& \BA\Bx(t) + \Bb @u(t) \label{eq:xdot} \\
y(t)       &\!\!\!=\!\!\!& \Bc^*\Bx(t) + d @u(t); \label{eq:y}
\end{eqnarray}
here $\BA\in\Cnn$, $\Bb,\Bc\in\C^{n\times 1}$, and $d\in\C$  
($\cdot^*$ denotes the conjugate-transpose).
For details about projection-based reduced-order modeling,
see, e.g.,~\cite{Ant05b}.  

Orthogonal projection algorithms restrict the state to evolve 
in the $k$-dimensional subspace $\CV \subset \Cn$,
then close the system by imposing a \emph{Galerkin condition}:
the reduced system's misfit should be orthogonal to the subspace $\CV$. 
While the choice of $\CV$ is crucial to the quality and properties
of the resulting ROM, many of the results 
we discuss apply to any choice of $\CV$
(including $\CV$ derived from moment-matching
reduction of multi-input, multi-output (MIMO) systems).

Let the columns of $\BV\in\Cnk$ form an orthonormal basis for $\CV$, so
$\BV^*\BV = \BI$.
To reduce the dimension of the system~(\ref{eq:xdot})--(\ref{eq:y}),
approximate $\Bx(t) \approx \BV \xhat(t) \in \CV$.\ \ 
One cannot simply replace $\Bx(t)$ by $\BV@\xhat(t)$ in~(\ref{eq:xdot}),
since in general $\BA\BV@\xhat(t) + \Bb @u(t) \not \in \CV$.\ \ 
To obtain a well-determined equation, impose the Galerkin condition
\begin{equation} \label{eq:Galerkin}
 \BV^*\Big(\BV@\dot{\xhat}(t) - \big(\BA\BV\xhat(t) + \Bb @u(t)\big)\Big) = \Bzero,
\end{equation}
which yields the reduced system
\begin{eqnarray}
\dot{\xhat}(t) &=& (\VAV)@\xhat(t) + (\BV^*\Bb) @u(t) \label{eq:xhatdot} \\
\yhat(t)       &=& (\Bc^*\BV)@\xhat(t) + d@ u(t). \label{eq:yhat}
\end{eqnarray}
(Oblique projection methods impose the orthogonality in (\ref{eq:Galerkin})
against a different subspace; see Section~\ref{sec:oblique} for details.)

For an effective ROM, one seeks a subspace $\CV$
of smallest possible dimension $k$ for which the reduced
output $\yhat(t)$ mimics the true output $y(t)$,
i.e., to make $\|y-\yhat\|$ small in an appropriate norm.
Taking $\CV$ to be a Krylov subspace gives particularly appealing properties,
but the framework we describe also applies to the Galerkin proper orthogonal
decomposition (POD) method (see, e.g., \cite{KV02,RP03}) 
applied to a linear system; one might extrapolate some insight 
about the behavior of POD for nonlinear systems.

%%%%%%%%%%%%%%%%%%%%%%%%%%%%%%%%%%%%%%%%%%%%%%%%%%%%%%%%%%%%%%%%%%%%%%%%%%%%%%%%
\subsection{Moment-matching projection}
%%%%%%%%%%%%%%%%%%%%%%%%%%%%%%%%%%%%%%%%%%%%%%%%%%%%%%%%%%%%%%%%%%%%%%%%%%%%%%%%
The degree-$k$ Krylov subspace generated by the matrix $\BA\in\Cnn$ and
vector $\Bb\in\Cn$ is
\begin{equation} \label{eq:kry}
   \CK_k(\BA,\Bb) = {\rm span}\{\Bb, \BA\Bb, \ldots, \BAs^{k-1}\Bb\}.
\end{equation}
Under mild conditions, ${\rm dim}(\CK_k(\BA,\Bb)) = k$.
If we take $\CV = \CK_k(\BA, \Bb)$ and let $\BV$ have 
orthonormal columns,  then \emph{the resulting ROM 
matches the first $k$ moments of the original model}:
\[ \Bc^*\BAs^s\kern1pt \Bb = (\Bc^* \BV)(\VAV)^s(\BV^*\Bb), 
  \quad s=0, \ldots, k-1.\]
That is, the first $k$ terms of the Taylor expansion of the
transfer function
\[ H(z) := \Bc^*(z\BI-\BA)^{-1}\Bb\]
match those of the reduced transfer function
\[ \wh{H}(z) := (\BV\Bc)^*(z\BI-\VAV)^{-1}(\BV^*\Bb)\]
when expanded about $z=\infty$~\cite[Section~11.2]{Ant05b}.
To better capture the frequency response about some finite point $\mu\in\C$,
one can instead select
\[ \CV = \CK_k((\mu@\BI-\BA)^{-1},\Bb)\]
to match moments at $\mu$.
(The oblique projection method based on the bi-Lanczos method addressed 
in Section~\ref{sec:oblique} gives ROMs that match twice as many moments 
as these orthogonal projection Krylov methods, but this extra measure of 
accuracy can come at the cost of numerical challenges and additional unstable 
modes.)

\smallskip
This elegant moment matching gives a compelling motivation for Krylov 
projection methods, but these techniques have a crucial weakness:
even when the matrix $\BA$ is stable (all eigenvalues in the left half-plane),
the reduced model $\VAV$ can have eigenvalues in the right half-plane:
the original system is asymptotically stable (all solutions to
$\dot\Bx(t) = \BA\Bx(t)$ converge to zero), but the ROM supports solutions 
that diverge as $t\to\infty$.

In this note we investigate this phenomenon, drawing on results that have been
developed to explain the behavior of iterative methods for the solution
of linear algebraic systems and eigenvalue problems.
After reviewing spectral properties associated with transient dynamics 
in Section~\ref{sec:spectral},
in Sections~\ref{sec:orth_cont} and~\ref{sec:orth_disc} we give upper bounds
on the number of unstable eigenvalues the reduced matrix $\VAV$ can have
for continuous- and discrete-time systems.
Sections~\ref{sec:orth_bad} and~\ref{sec:oblique} describe adversarial constructions
that can produce many unstable modes for orthogonal and oblique projection methods.
Throughout these sections, we illustrate theory with toy examples that are easy to analyze.

Are unstable modes merely a scourge?
In Section~\ref{sec:unstable} we argue that unstable modes
can actually give valuable insight about the transient behavior of 
the original system, and efforts to tame these unstable modes can
result in stable ROMs that fail to accurately model the short-term
behavior of the original system.

Throughout, we use $\Be_j$ to denote the $j$th column of the identity matrix
(whose dimension should be clear from the context), and, unless noted otherwise,
 $\|\cdot\|$ to denote the vector 2-norm and the associated matrix norm.

%%%%%%%%%%%%%%%%%%%%%%%%%%%%%%%%%%%%%%%%%%%%%%%%%%%%%%%%%%%%%%%%%%%%%%%%%%%%%%%%
\section{Spectral preliminaries} \label{sec:spectral} 
%%%%%%%%%%%%%%%%%%%%%%%%%%%%%%%%%%%%%%%%%%%%%%%%%%%%%%%%%%%%%%%%%%%%%%%%%%%%%%%%

Since we seek to understand the  asymptotic and transient
behavior of dynamical systems (both full- and reduced-order models),
we review some helpful quantities associated with the spectrum.  
For more detailed descriptions and illustrative examples, see~\cite{TE05plain}.

Denote the spectrum (set of eigenvalues) of $\BA$ by
\[ \sigma(\BA) := \{\lambda_1, \ldots, \lambda_n\}.\]
Two scalar quantities dictate 
the asymptotic stability of continuous- and discrete-time systems,
the \emph{spectral abscissa} $\alpha(\BA)$ and the 
\emph{spectral radius} $\rho(\BA)$:
\[ \alpha(\BA) := \max_{\lambda \in \sigma(\BA)} {\rm Re}\, \lambda, \qquad
   \rho(\BA)   := \max_{\lambda \in \sigma(\BA)} |\lambda|.
\]
The \emph{numerical range} (or \emph{field of values}) 
\begin{equation} \label{eq:nr}
 W(\BA) := \{ \Bv^*\BA\Bv : \Bv\in\Cn, \|\Bv\| = 1\} 
\end{equation}
is a closed, convex subset of $\C$ that contains $\sigma(\BA)$; 
for details, see~\cite[Chapter~1]{HJ91}. 
We denote its maximal real extent and magnitude as the
\emph{numerical abscissa} $\omega(\BA)$ and the 
\emph{numerical radius} $\nu(\BA)$:
\begin{equation} \label{eq:nabsrad}
 \omega(\BA) := \max_{z\in W(\BA)} {\rm Re}\ z, \qquad
      \nu(\BA) := \max_{z\in W(\BA)} |z|.
\end{equation}
For any $\eps>0$, the \emph{$\eps$-pseudospectrum} of $\BA$,
\begin{eqnarray*}
\sigma_\eps(\BA) 
     &:=& \{ z\in \C:  \|(z\BI-\BA)^{-1}\|>1/\eps\} \\[.5em]
     &\kern3pt=& \{ z \in \sigma(\BA+\BE) \mbox{ for some $\BE\in\Cnn$} \\
     && \hspace*{10em} \mbox{with $\|\BE\|<\eps$}\},
\end{eqnarray*}
contains $\sigma(\BA)$, but also potentially points 
that are a distance much greater than $\eps$ from the spectrum. 
To analyze transient behavior of solutions to $\dot\Bx(t)=\BA\Bx(t)$, 
we can use the
\emph{$\eps$-pseudospectral abscissa} $\alpha_\eps(\BA)$ and the 
\emph{$\eps$-pseudospectral radius} $\rho_\eps(\BA)$:
\begin{equation} \label{eq:psabsrad}
 \alpha_\eps(\BA) := \max_{z \in \sigma_\eps(\BA)} {\rm Re}\, z, \qquad
   \rho_\eps(\BA)   := \max_{z \in \sigma_\eps(\BA)} |z|.
\end{equation}
A theorem of Stone (see~\cite[eq.~(17.9)]{TE05plain}) shows that the
$\eps$-pseudospectrum cannot be more than $\eps$ larger than the
numerical range:
\[ \sigma_\eps(\BA) \subseteq W(\BA) + \Delta_\eps,\]
where $\Delta_\eps = \{z\in\C:|z|<\eps\}$ is the open ball of
radius $\eps$.

(The definition of $\sigma_\eps(\BA)$ permits general perturbations $\BE\in\Cnn$.
If $\BA$ is real valued, $\BA\in\Rnn$, might one gain insight by restricting perturbations
to $\BE\in\Rnn$?  This question motivates the study of \emph{structured pseudospectra},
or \emph{spectral value sets}~\cite{HK93,Kar03,Rum06}.  Considering only real
perturbations can significantly reduce the set $\sigma_\eps(\BA)$, 
but cannot improve the condition number of any eigenvalue by more than a factor of $1/\sqrt{2}$~\cite{BK04}.
For analyzing transient behavior of a linear system,
Example~(49.9) in~\cite{TE05plain} shows that complex perturbations are necessary
to reveal the potential for transient growth of real-valued linear systems.)

We seek to use the sets $\sigma(\BA)$, $W(\BA)$, and $\sigma_\eps(\BA)$ 
to gain insight into projection-based ROMs.
A class of matrices is worth singling out for their clean properties:
a matrix is \emph{normal} provided $\BAs^*\BA = \BA\BAs^*$.
Equivalently, a normal matrix has a unitary basis of eigenvectors.
This latter property makes it easy to show that
\[ \mbox{$\BA$ normal} \Longrightarrow 
      \left\{ \begin{array}{ll}
               W(\BA) = \mbox{ convex hull of $\sigma(\BA)$}; \\[.5em]
               \sigma_\eps(\BA) = \sigma(\BA) + \Delta_\eps. 
      \end{array}\right. \]
(Hermitian ($\BA=\BAs^*$), skew-Hermitian ($\BA = -\BAs^*$), 
and unitary ($\BAs^*\BA = \BI$) matrices are all normal.)
We will refer to the ``departure from normality'' as a 
gauge of how far a matrix is from the set of normal matrices.

%%%%%%%%%%%%%%%%%%%%%%%%%%%%%%%%%%%%%%%%%%%%%%%%%%%%%%%%%%%%%%%%%%%%%%%%%%%%%%%%
\subsection{Potential for unstable modes}
%%%%%%%%%%%%%%%%%%%%%%%%%%%%%%%%%%%%%%%%%%%%%%%%%%%%%%%%%%%%%%%%%%%%%%%%%%%%%%%%

We shall say that a continuous-time system is \emph{stable}
(i.e., \emph{asymptotically stable}) provided
\[ \alpha(\BA) < 0,\]
i.e., $\sigma(\BA)$ is contained in the open left half of the complex plane.
This condition implies that all solutions $\Bx(t) = \eop^{t\BA}\Bx(0)$ to $\dot\Bx(t) = \BA\Bx(t)$ converge to zero as $t\to\infty$.
Similarly, a discrete-time system is \emph{stable} provided
\[ \rho(\BA) < 1,\]
i.e., $\sigma(\BA)$ is contained in the open unit disk, so
all solutions $\Bx_k = \BAs^k \Bx_0$ to $\Bx_{k+1} = \BA\Bx_k$ 
converge to zero as $k\to\infty$.

Where can eigenvalues of $\VAV$ fall, relative to these spectral quantities 
associated with $\BA$?
We begin with a fundamental property of non-Hermitian 
eigenvalue approximation; cf.~\cite[Prop.~1.2.13]{HJ91}, \cite[Thm.~3.1]{MS96}.

\begin{proposition} \label{prop:fov}
Suppose the columns of $\BV\in\Cnk$ are orthonormal. Then 
\begin{equation} \label{eq:Wcontain}
 \sigma(\VAV) \subseteq W(\BA),
\end{equation}
and so any $\theta \in \sigma(\VAV)$ must satisfy
\[ {\rm Re}\, \theta \le \omega(\BA), \qquad |\theta| \le \nu(\BA).\]
\end{proposition}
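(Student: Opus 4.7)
The plan is to prove the containment $\sigma(\BV^*\BA\BV) \subseteq W(\BA)$ directly from the definition of the numerical range, and then deduce the two scalar bounds as immediate consequences of the definitions of $\omega(\BA)$ and $\nu(\BA)$.

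First I would take an arbitrary eigenvalue $\theta \in \sigma(\BV^*\BA\BV)$ together with a corresponding unit eigenvector $\Bw \in \Ck$, so that $(\VAV)\Bw = \theta \Bw$ and $\Bw^*\Bw = 1$. Left-multiplying by $\Bw^*$ gives the Rayleigh-quotient-type identity
\[
  \theta \;=\; \Bw^*(\VAV)\Bw \;=\; (\BV\Bw)^* \BA\, (\BV\Bw).
\]
The key observation is that $\BV\Bw \in \Cn$ is itself a unit vector: because the columns of $\BV$ are orthonormal, $\BV^*\BV = \BI$, hence
\[
  \|\BV\Bw\|^2 \;=\; \Bw^*\BV^*\BV\Bw \;=\; \Bw^*\Bw \;=\; 1.
\]
Setting $\Bv := \BV\Bw$, the identity above exhibits $\theta$ as $\Bv^*\BA\Bv$ with $\|\Bv\|=1$, which by the definition~(\ref{eq:nr}) places $\theta$ in $W(\BA)$. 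Since $\theta$ was arbitrary, this proves~(\ref{eq:Wcontain}).

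The two scalar consequences are then immediate: by the definitions~(\ref{eq:nabsrad}), every $z \in W(\BA)$ satisfies ${\rm Re}\,z \le \omega(\BA)$ and $|z| \le \nu(\BA)$, so applying this with $z = \theta$ yields the stated inequalities. There is no genuine obstacle here — the argument is a one-line Rayleigh-quotient computation — so the only thing worth being careful about is the direction of the embedding: we are lifting eigenvectors of the $k\times k$ compression back to $\Cn$ via $\BV$, not the other way around, and it is precisely the isometry property $\BV^*\BV=\BI$ (not $\BV\BV^*=\BI$, which fails when $k<n$) that makes $\BV\Bw$ a legitimate unit test vector for $W(\BA)$.
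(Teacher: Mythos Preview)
Your proof is correct and follows essentially the same approach as the paper: take a unit eigenvector of $\VAV$, lift it to $\Cn$ via $\BV$, use $\BV^*\BV=\BI$ to verify the lift is a unit vector, and read off $\theta$ as a Rayleigh quotient of $\BA$. The paper's argument is identical up to notation (it calls the eigenvector $\By$ rather than $\Bw$) and omits your closing remarks about the scalar bounds and the isometry direction, but those are just expository additions.
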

The proof of~(\ref{eq:Wcontain}) is simple: 
if $\theta\in\sigma(\VAV)$, there exists a 
unit vector $\By$ such that $(\VAV)\By = \theta@\By$. 
Then $\|\BV\By\|^2 = \|\By\|^2 = \By^*\By=1$ 
since $\BV$ has orthonormal columns, and
\[ (\BV\By)^*\BA(\BV\By) = \By^*\VAV\By = \theta@@\By^*\By = \theta;\]
use the definition~(\ref{eq:nr}) to conclude that $\theta\in W(\BA)$.

\smallskip
It follows that if $W(\BA)$ is contained in the left half-plane 
(i.e., $\omega(\BA)<0$), 
then $\VAV$ is guaranteed to be stable.  
All stable normal matrices satisfy this property:
if $\sigma(\BA)$ is contained in the left half-plane, so too is 
its convex hull, which equals $W(\BA)$ for normal $\BA$.

\begin{proposition}
If $\BA$ is stable and normal, then $\VAV$ is stable for any 
choice of the subspace $\CV$.
\end{proposition}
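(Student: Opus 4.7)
The plan is to chain together Proposition~\ref{prop:fov} with the special structure of the numerical range for normal matrices, both of which appear earlier in the excerpt. Since $\BA$ is stable, $\sigma(\BA)$ lies in the open left half-plane $\{z\in\C:{\rm Re}\,z<0\}$. Because $\BA$ is normal, the excerpt tells us that $W(\BA)$ equals the convex hull of $\sigma(\BA)$. The open left half-plane is convex, so it contains this convex hull; hence $W(\BA)$ is itself contained in the open left half-plane, and in particular $\omega(\BA)<0$.

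Now I invoke Proposition~\ref{prop:fov}: for any subspace $\CV$ with orthonormal basis $\BV$, the spectrum of the compressed matrix satisfies $\sigma(\VAV)\subseteq W(\BA)$. Combining this with the previous step, every eigenvalue $\theta$ of $\VAV$ has ${\rm Re}\,\theta\le\omega(\BA)<0$, so $\alpha(\VAV)<0$ and the reduced system is stable.

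There is no real obstacle here: the proposition is essentially a one-line corollary, and the entire argument is a syllogism combining (i) the field-of-values inclusion for orthogonal compressions, (ii) the coincidence $W(\BA)=\mathrm{conv}(\sigma(\BA))$ for normal $\BA$, and (iii) convexity of the open left half-plane. The only small point worth flagging is that one uses the \emph{open} half-plane throughout (stability requires strict inequality), which works because $W(\BA)$ is the convex hull of finitely many points strictly to the left of the imaginary axis, and such a convex hull stays strictly to the left as well.
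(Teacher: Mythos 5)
Your argument is exactly the paper's: the proposition is stated as an immediate consequence of the preceding observation that for stable normal $\BA$ the convex hull of $\sigma(\BA)$, which equals $W(\BA)$, lies in the open left half-plane, combined with the containment $\sigma(\VAV)\subseteq W(\BA)$ from Proposition~1. Your additional remark about strictness (the convex hull of finitely many points strictly in the left half-plane remains strictly in it) is a correct and worthwhile clarification of a point the paper passes over silently.
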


Unfortunately, for many interesting stable models we find that
$\omega(\BA)>0$.
These are the matrices in which we are primarily concerned here.
%(See the start of Section~\ref{sec:orth_cont} for a simple example.)

%%%%%%%%%%%%%%%%%%%%%%%%%%%%%%%%%%%%%%%%%%%%%%%%%%%%%%%%%%%%%%%%%%%%%%%%%%%%%%%%
\subsection{Transient behavior}
%%%%%%%%%%%%%%%%%%%%%%%%%%%%%%%%%%%%%%%%%%%%%%%%%%%%%%%%%%%%%%%%%%%%%%%%%%%%%%%%

The numerical abscissa $\omega(\BA)$ does not simply bound the rightmost extent of 
$\theta\in\sigma(\VAV)$, as in Proposition~\ref{prop:fov};
it also signals whether solutions $\eop^{t\BA}\Bx(0)$ to 
$\dot{\Bx}(t) = \BA\Bx(t)$ can initially exhibit transient growth~\cite[Chapter~17]{TE05plain}:
\begin{equation} \label{eq:t0}
\max_{\|\Bx(0)\|=1} {\dop \over \dop t} \|\Bx(t)\| \bigg|_{t=0} 
= {\dop \over \dop t} \|\eop^{t\BA}\| \bigg|_{t=0} = \omega(\BA).
\end{equation}
The possibility for stable systems to grow on transient time scales has
important physical implications, especially for systems that arise
as linearizations of nonlinear systems; see~\cite[Part~V]{TE05plain} for examples
from fluid dynamics, and Section~\ref{sec:unstable} for an example involving
a nonlinear heat equation.

The formula~(\ref{eq:t0}) based on the numerical range 
describes the system's performance near $t=0$;
pseudospectra give insight into the maximum transient growth.
The simplest result gives a lower bound~\cite[Theorem~15.4]{TE05plain}:
\[ \sup_{t\ge 0} \|\eop^{t\BA}\| \ge {\alpha_\eps(\BA,\BE) \over \eps}\]
for all $\eps>0$.
%see, e.g., \cite[Theorem~11\eps]{ET01} for a proof.
% While we have focused on the standard system $\dot\Bx(t) = \BA\Bx(t)$, 
The sets $W(\BA)$ and $\sigma_\eps(\BA)$ generalize to 
matrix pencils, informing the transient dynamics of 
differential--algebraic equations and descriptor systems~\cite{EK17}.

%%%%%%%%%%%%%%%%%%%%%%%%%%%%%%%%%%%%%%%%%%%%%%%%%%%%%%%%%%%%%%%%%%%%%%%%%%%%%%%%
\section{An upper bound on unstable modes for orthogonal projection ROMs (continuous time case)} \label{sec:orth_cont}
%%%%%%%%%%%%%%%%%%%%%%%%%%%%%%%%%%%%%%%%%%%%%%%%%%%%%%%%%%%%%%%%%%%%%%%%%%%%%%%%
Let $\BA$ be a stable matrix with eigenvalues $\lambda_1, \ldots, \lambda_n$
all satisfying ${\rm Re}\,\lambda_j < 0$, and let 
\[  \VAV \in \C^{k\times k} \]
denote an order-$k$ ROM constructed via orthogonal projection.
The columns of $\BV\in\Cnk$ are orthonormal, but 
we make no assumptions about the projection subspace 
${\rm range}(\BV)$; it could derive from a Krylov method,
POD, or any other algorithm.  
For example, while we introduced moment matching model reduction
for SISO systems, the results in this section also apply 
to moment matching for MIMO systems of the form
\begin{eqnarray*}
\dot{\Bx}(t) &\!\!\!=\!\!\!& \BA\Bx(t) + \BB @\Bu(t)  \\
\By(t)       &\!\!\!=\!\!\!& \BC\Bx(t) + \BD @\Bu(t),
\end{eqnarray*}
where ${\rm range}(\BV) = {\rm range}([\BB,\ \BA\BB,\ \ldots, \BAs^{\ell-1}\BB])$
is a block Krylov subspace of dimension $k$.

\smallskip
We begin with a simple example that shows how $W(\BA)$ can extend into
the right half-plane, even when $\BA$ is stable.
The Hermitian part of $\BA$,
\[ \BH := {\textstyle{1\over2}} (\BA+\BAs^*),\]
plays a critical role in stability theory.
Even when a non-Hermitian $\BA$ is stable, $\BH$ need not be.  
For example, for
\[ \BA = \left[\begin{array}{rr} -1 & 4 \\ 0 & -1\end{array}\right], \qquad
   \BH = \left[\begin{array}{rr} -1 & 2 \\ 2 & -1\end{array}\right], 
\] 
giving
\[ \sigma(\BA) = \{ -1, -1\}, \qquad \sigma(\BH) = \{-3, +1\};\]
and hence $\BH$ is not stable despite the stability of $\BA$.

\smallskip
Since $\BH$ is Hermitian, its eigenvalues are real.  
Label them in decreasing order as 
\[ \mu_1 \ge \mu_2 \ge \cdots \ge \mu_n.\]
Notice that $\mu_1 = \omega(\BA)$, the numerical abscissa~(\ref{eq:nabsrad})
that describes the rightmost extent of the numerical range $W(\BA)$.
To see this, take any $z\in W(\BA)$, for which there must exist some 
unit vector $\Bv\in\Cn$ such that $z = \Bv^*\BA\Bv$.  Then
\[ {\rm Re}\, z = {z+\overline{z} \over 2} 
                = \Bv^* \bigg({\BA+\BAs^*\over 2}\bigg) \Bv
                = \Bv^* \BH \Bv.\]
Thus the real part of any $z$ in the numerical range of $\BA$ is a
Rayleigh quotient for $\BH$.  
By the variational characterization of eigenvalues of Hermitian matrices,
\[ \mu_n \le {\rm Re}\,z \le \mu_1,\]
with equality attained when $\Bv$ is an eigenvector of
$\BH$ associated with $\mu_n$ or $\mu_1$;
see, e.g., \cite[Theorem 4.2.6]{HJ13}.

\smallskip
Via~(\ref{eq:t0}), the rightmost eigenvalue of $\BH$ gives insight 
into the initial behavior of solutions to $\dot\Bx(t) = \BA\Bx(t)$.
Only recently has it been appreciated that the \emph{interior
eigenvalues of $\BH$} help bound the eigenvalues of $\VAV$.
We first state a result from~\cite[Theorem~2.1]{CE12},
which was developed to support convergence analysis for the 
restarted Arnoldi method for computing 
eigenvalues of large matrices.
Theorem~\ref{thm:carden} establishes vertical strips in the complex plane 
where eigenvalues of $\VAV$ must fall. 
We follow the statement with a small example to illustrate its application.

%%%%%%%%%%%%%%%%%%%%%%%%%%%%%%%%%%%%%%%%%%%%%%%%%%%%%%%%%%%%%%%%%%%%%%%%%%%%%%%%
\begin{theorem} \label{thm:carden}
Denote the eigenvalues of $\VAV \in \Ckk$ by
$\theta_1, \ldots, \theta_k$, labeled by decreasing real part:
\[ {\rm Re}\, \theta_1 \ge 
   {\rm Re}\, \theta_2 \ge  \cdots \ge
   {\rm Re}\, \theta_k.
\]
Let $M_{\pm j}$ denote the arithmetic mean 
of the $j$ largest and smallest eigenvalues of $\BH$,
\begin{eqnarray*}
   M_j &\!\!\!:=\!\!\!& {\mu_1 + \cdots + \mu_j \over j}, \rlap{\kern32.5pt$1\le j\le n$,} \\
   M_{-j} &\!\!\!:=\!\!\!& {\mu_{n-j+1} + \cdots + \mu_n \over j}, \rlap{\quad $1\le j\le n$,} 
\end{eqnarray*}
so $M_1 \ge M_2 \ge \cdots \ge M_n$ and 
$M_{-1} \le M_{-2} \le \cdots \le M_{-n}$.
Then for $1\le j\le k$,
\begin{equation} \label{eq:carden} 
    M_{-k+j-1} \ \le\  {\rm Re}\, \theta_j\  \le\  M_j.
\end{equation}
\end{theorem}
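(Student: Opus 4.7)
The plan is to reduce the question to a trace identity for the Hermitian part of $\VAV$ and then invoke the Ky Fan maximum principle for $\BH$. A single Schur triangularization that is ordered by decreasing real part lets us extract each individual eigenvalue bound from a family of partial-sum inequalities.

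First, I would fix a Schur decomposition $\VAV = \BQ\BT\BQ^*$ with $\BQ \in \Ckk$ unitary and $\BT$ upper triangular. Since any ordering of the eigenvalues on the Schur diagonal is attainable, arrange the diagonal of $\BT$ as $\theta_1, \theta_2, \ldots, \theta_k$ sorted by decreasing real part. Put $\widetilde\BV := \BV\BQ \in \Cnk$; its columns are orthonormal, and a direct computation gives $\widetilde\BV^*\BA\widetilde\BV = \BT$, so
\[
\widetilde\BV^*\BH\widetilde\BV \;=\; \textstyle{1\over 2}(\BT + \BT^*),
\]
a Hermitian $k \times k$ matrix whose diagonal entries are precisely ${\rm Re}\,\theta_1, \ldots, {\rm Re}\,\theta_k$ in that order.

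Second, for each $1 \le j \le k$, let $\widetilde\BV_j$ denote the matrix of the first $j$ columns of $\widetilde\BV$, which still has orthonormal columns. Then
\[
\sum_{i=1}^j {\rm Re}\,\theta_i \;=\; {\rm tr}\bigl(\widetilde\BV_j^*\BH\widetilde\BV_j\bigr) \;\le\; \mu_1 + \cdots + \mu_j \;=\; j\,M_j,
\]
where the inequality is Ky Fan's maximum principle for $\BH$. Dividing by $j$ and using that ${\rm Re}\,\theta_j$ is the smallest among ${\rm Re}\,\theta_1, \ldots, {\rm Re}\,\theta_j$ by our ordering yields ${\rm Re}\,\theta_j \le M_j$. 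The lower bound is symmetric: applying Ky Fan to $-\BH$ with the last $k-j+1$ columns of $\widetilde\BV$ gives
\[
\sum_{i=j}^k {\rm Re}\,\theta_i \;\ge\; \mu_{n-k+j} + \cdots + \mu_n \;=\; (k-j+1)\,M_{-(k-j+1)},
\]
and since ${\rm Re}\,\theta_j$ is the largest of the $k-j+1$ terms on the left, ${\rm Re}\,\theta_j \ge M_{-(k-j+1)} = M_{-k+j-1}$.

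The only nontrivial ingredient is the orderable Schur form (a classical fact) combined with Ky Fan's theorem; everything else is bookkeeping. I expect the main pitfall to be index arithmetic — reconciling $M_{-(k-j+1)}$ with the $M_{-k+j-1}$ used in the statement, and making clear that Ky Fan is applied to the Hermitian compression $\widetilde\BV_j^*\BH\widetilde\BV_j$ rather than to $\BH$ itself. No genuinely hard estimate appears; the content is really that ordering the Schur diagonal turns a pair of ``sum of $j$ largest/smallest'' Ky Fan bounds into individual eigenvalue bounds.
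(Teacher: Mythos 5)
Your proof is correct; the paper does not prove Theorem~\ref{thm:carden} itself but cites \cite[Theorem~2.1]{CE12}, and your route---an ordered Schur form of the compression so that ${\rm Re}\,\theta_1,\ldots,{\rm Re}\,\theta_k$ become the diagonal of a compression of $\BH$, followed by Ky Fan's trace maximum/minimum principle and the monotone ordering---is essentially the argument given there (Carden and Embree reach the same partial-sum inequalities via Cauchy interlacing combined with Schur's majorization of the diagonal by the eigenvalues, which is equivalent to invoking Ky Fan directly). Your index bookkeeping $M_{-k+j-1}=M_{-(k-j+1)}$ is also handled correctly.
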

%%%%%%%%%%%%%%%%%%%%%%%%%%%%%%%%%%%%%%%%%%%%%%%%%%%%%%%%%%%%%%%%%%%%%%%%%%%%%%%%

Thus, the $j$th rightmost eigenvalue $\theta_j$ of $\VAV$ must fall in the
intersection of $W(\BA)$ with the vertical strip 
\[ M_{-k+j-1} \le {\rm Re}\, z \le M_j.\]

%%%%%%%%%%%%%%%%%%%%%%%%%%%%%%%%%%%%%%%%%%%%%%%%%%%%%%%%%%%%%%%%%%%%%%%%%%%%%%%%
\begin{figure}[b!]
\vspace*{1.5em}
\includegraphics[scale=0.26]{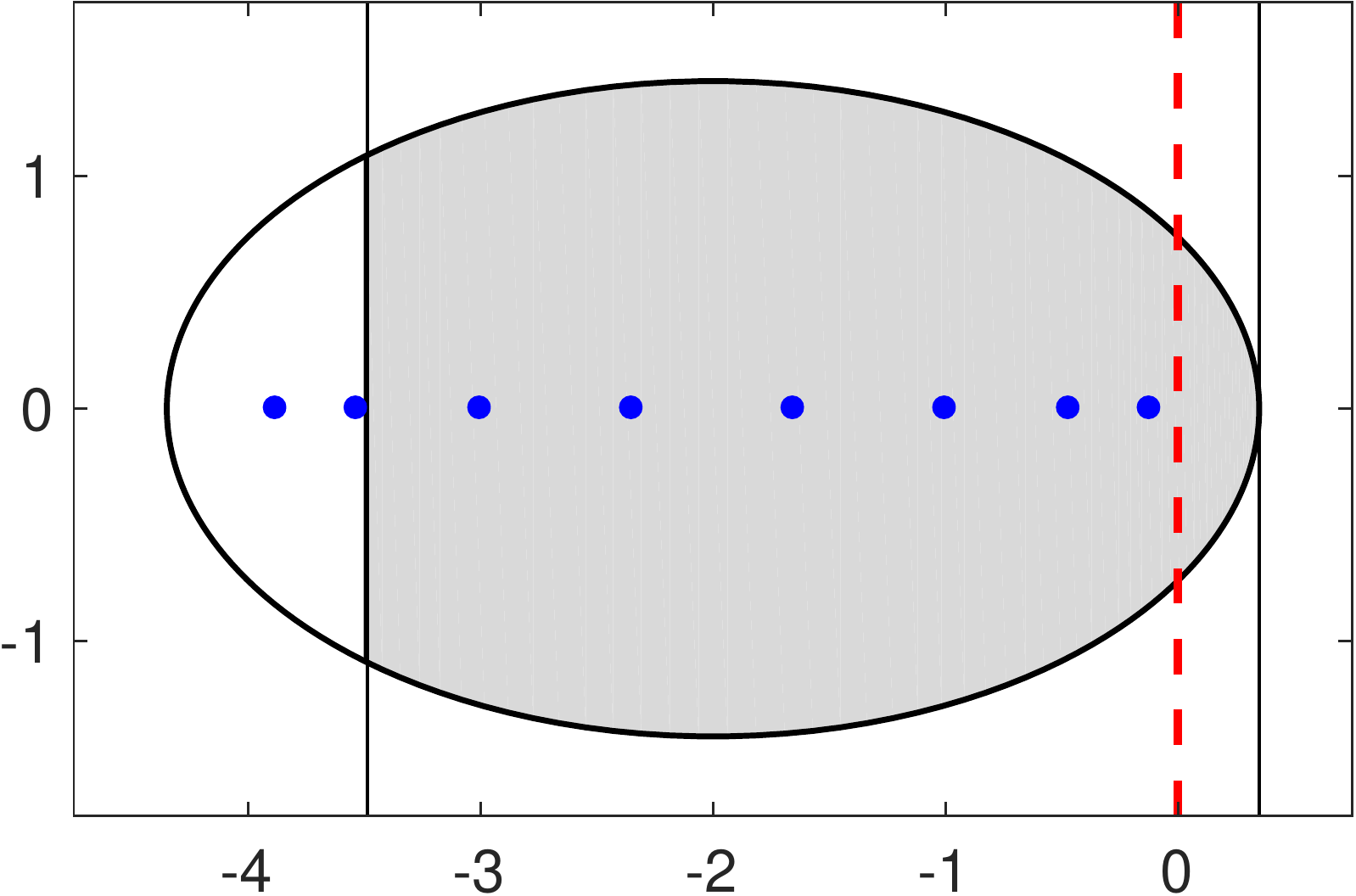}\quad
\includegraphics[scale=0.26]{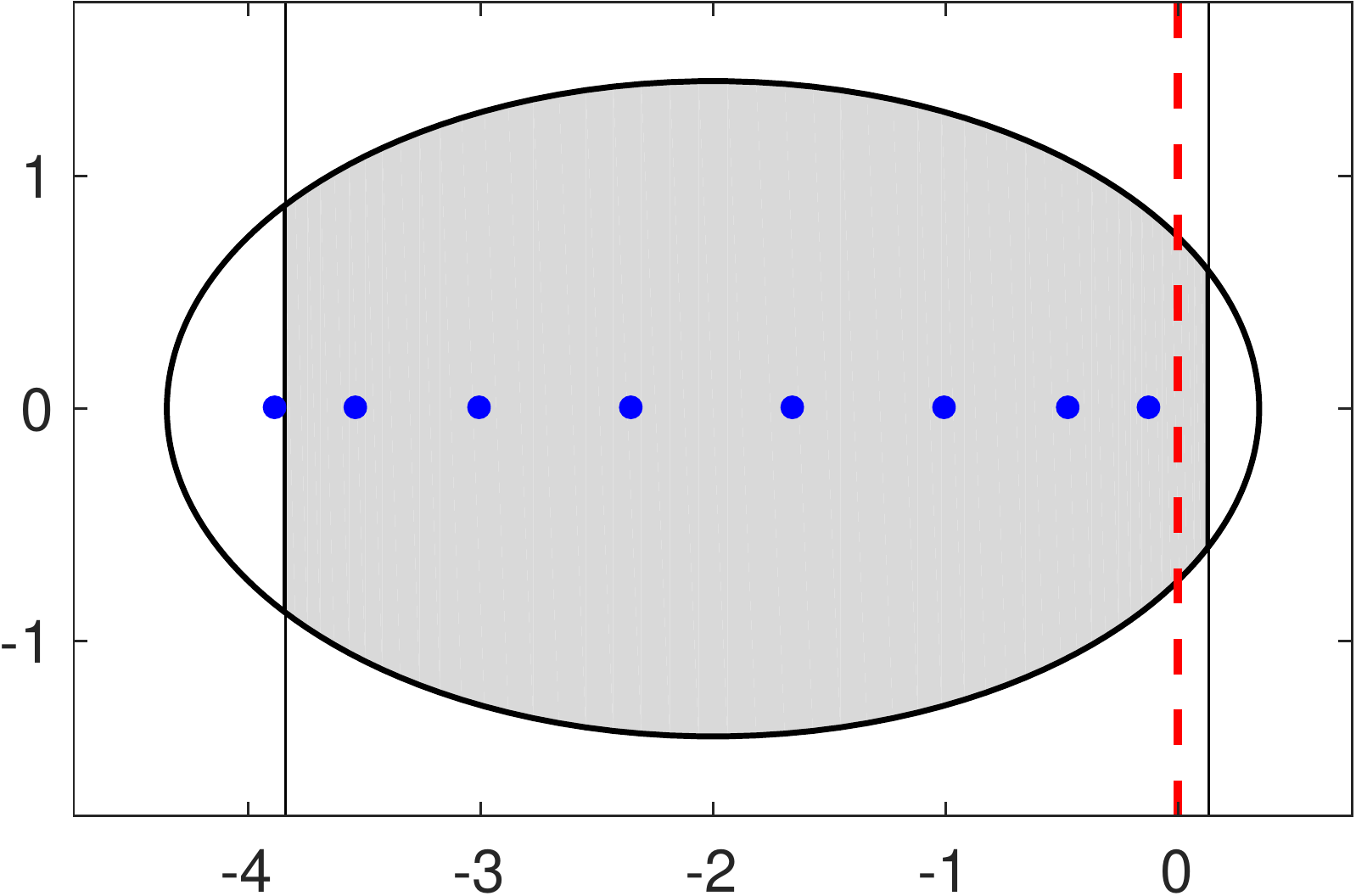}

\begin{picture}(0,0)
\put(25,95){\footnotesize $M_{-4}$}
\put(106,95){\footnotesize $M_1$}
\put(61,33){\footnotesize $\theta_1$}
\put(149,95){\footnotesize $M_{-3}$}
\put(232,95){\footnotesize $M_2$}
\put(190,33){\footnotesize $\theta_2$}
\end{picture}

\vspace*{1em}
\includegraphics[scale=0.26]{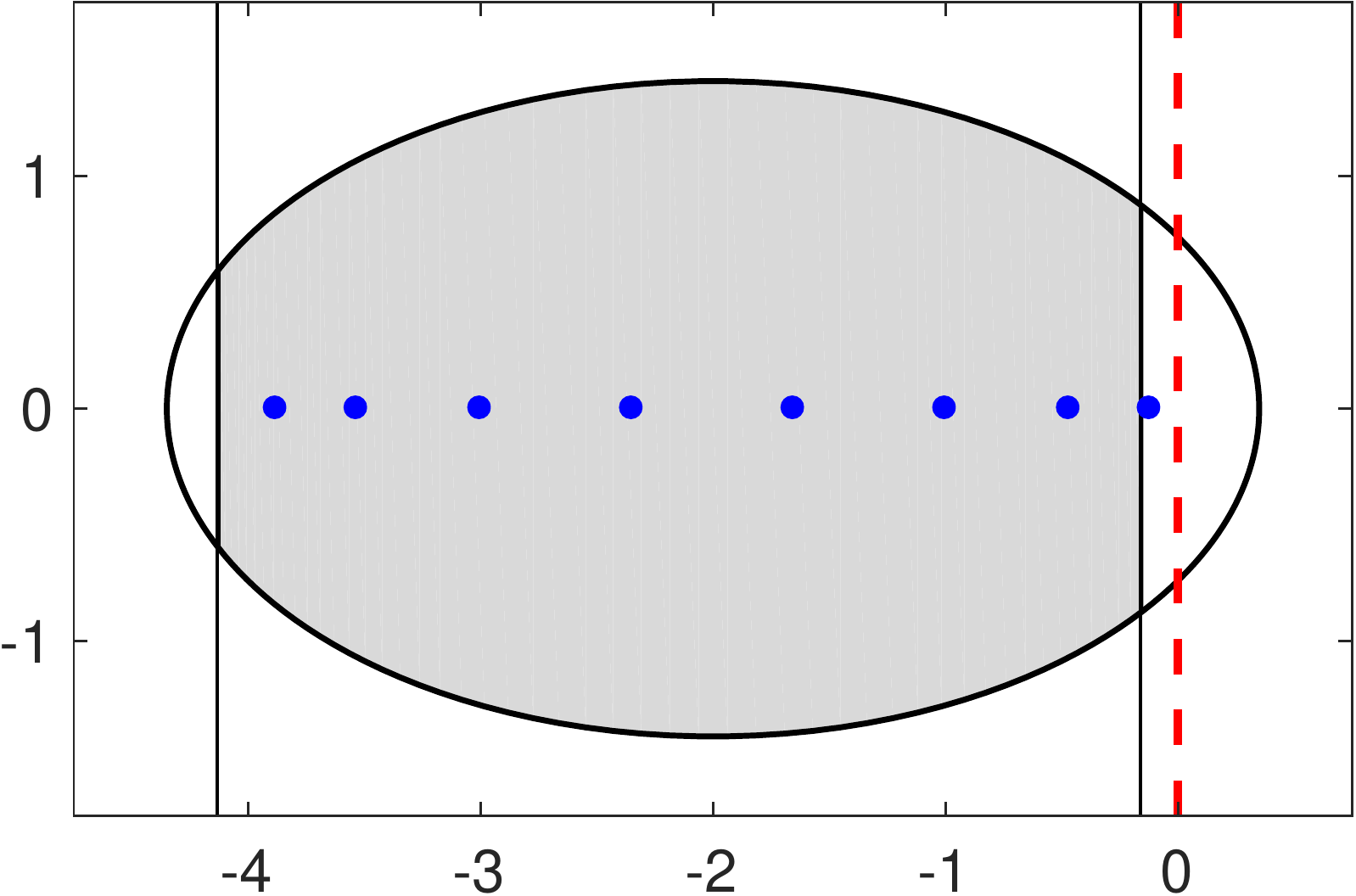}\quad
\includegraphics[scale=0.26]{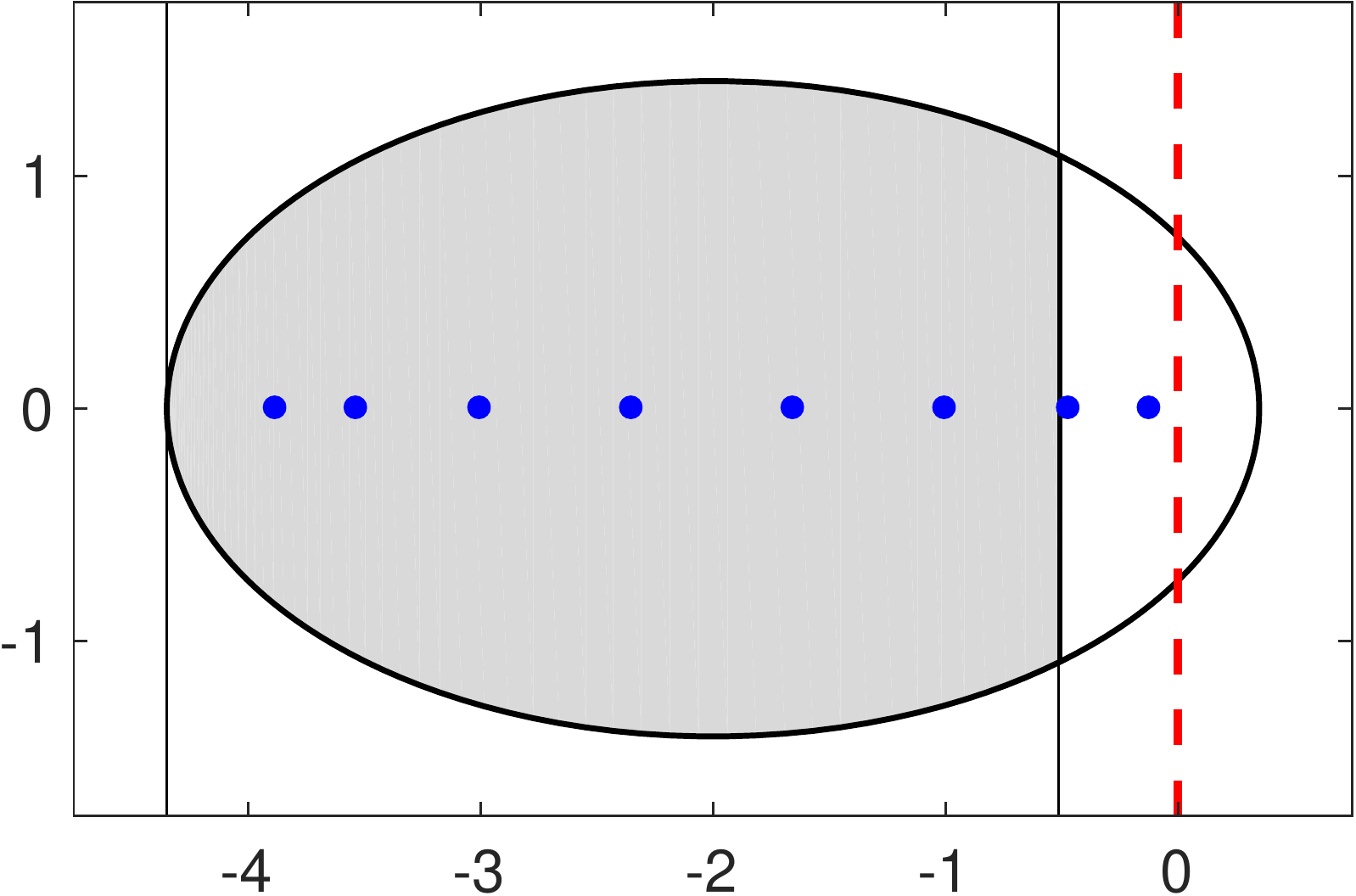}

\begin{picture}(0,0)
\put(12,95){\footnotesize $M_{-2}$}
\put(96,95){\footnotesize $M_3$}
\put(61,33){\footnotesize $\theta_3$}
\put(139,95){\footnotesize $M_{-1}$}
\put(219,95){\footnotesize $M_4$}
\put(190,33){\footnotesize $\theta_4$}
\end{picture}

\vspace*{-1em}
\caption{\label{fig:carden}
Illustration of Theorem~\ref{thm:carden} applied to the matrix~(\ref{eq:Atoep}) in
Example~\ref{ex:carden}.
The blue dots denote $\sigma(\BA)$;
the solid oval-shaped curve in the complex plane is the boundary of the numerical range $W(\BA)$; 
the dashed red line shows the imaginary axis.
The theorem ensures that the eigenvalue $\theta_j$ of $\VAV$ falls within the
associated $j$th gray subregion of $W(\BA)$.  Since only the $\theta_1$ and $\theta_2$ regions
extend into the right half-plane, $\VAV$ can have \emph{at most two unstable modes}.
}
\end{figure}

%%%%%%%%%%%%%%%%%%%%%%%%%%%%%%%%%%%%%%%%%%%%%%%%%%%%%%%%%%%%%%%%%%%%%%%%%%%%%%%%
\begin{example} \label{ex:carden}
Consider the tridiagonal Toeplitz matrix
\begin{equation} \label{eq:Atoep}
 \BA = \left[\begin{array}{ccccc}
     -2 & 2 \\ 1/2 & -2 & \ddots \\ & \ddots & \ddots & 2 \\ & & 1/2 & -2
     \end{array}\right]\in\C^{8\times 8}.
\end{equation}
This matrix is stable, with negative eigenvalues%
\footnote{For eigenvalues of tridiagonal Toeplitz matrices, see~\cite[p.~59]{Smi85}.}
\[ \sigma(\BA) = \{-2+2\cos(j\pi/9): j=1,\ldots, 8\}.\]
However, $W(\BA)$ extends into the right half-plane;
indeed, 
\[ \sigma(\BH) = \{-2+{\textstyle{5\over 2}}\cos(j\pi/9): j=1,\ldots, 8\},\]
and so $\mu_1 = \omega(\BA) = 0.3492\ldots.$
Suppose we seek a ROM of dimension $k=4$.
To five digits, we compute 
\begin{center}\begin{tabular}{rr}
$M_{1} =  \phantom{-}0.34923$, & $M_{-1} = -4.34923$,\\ 
$M_{2} =  \phantom{-}0.13217$, & $M_{-2} = -4.13217$,\\ 
$M_{3} = -0.16189$, & $M_{-3} = -3.83811$,\\ 
$M_{4} = -0.51288$, & $M_{-4} = -3.48712$.\\ 
\end{tabular} \end{center}
Since only two values of $M_j$ are positive, 
Theorem~\ref{thm:carden} guarantees that \emph{no more than two eigenvalues of $\VAV$
can be in the right half-plane}.
Figure~\ref{fig:carden} illustrates $W(\BA)$ (bounded by the oval curve),
with shaded regions indicating where Theorem~\ref{thm:carden} permits $\theta_j$ to fall.
Since $M_1, M_2 > 0$, the theorem permits 
$\theta_1, \theta_2$ to be in the right half-plane;  since $M_3<0$, all other eigenvalues
$\theta_j$ for $j>2$ \emph{must} be in the left half-plane.
(Computational experiments yield rare examples where $\theta_1$ and $\theta_2=\overline{\theta}_1$ fall in the right half-plane.)
\end{example}

%%%%%%%%%%%%%%%%%%%%%%%%%%%%%%%%%%%%%%%%%%%%%%%%%%%%%%%%%%%%%%%%%%%%%%%%%%%%%%%%
\begin{corollary} \label{cor:carden}
Given the notation of Theorem~\ref{thm:carden}, 
let $p$ denote the largest integer such that $M_p \ge 0$,
taking $p=0$ if $M_1< 0$.

The orthogonal projection ROM $\VAV$
can never have more than $p$ unstable modes 
(i.e., eigenvalues with nonnegative real part).
\end{corollary}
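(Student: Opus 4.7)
The plan is to derive the corollary as an immediate consequence of the upper bound in Theorem~\ref{thm:carden}, so the proof should be very short. First I would adopt the same labeling as in the theorem, $\mathrm{Re}\,\theta_1 \ge \mathrm{Re}\,\theta_2 \ge \cdots \ge \mathrm{Re}\,\theta_k$, and recall the bound $\mathrm{Re}\,\theta_j \le M_j$ from~(\ref{eq:carden}). The only auxiliary fact I need is the monotonicity $M_1 \ge M_2 \ge \cdots \ge M_n$ already recorded in the statement of Theorem~\ref{thm:carden} (it follows because $M_{j+1}$ blends $M_j$ with one more eigenvalue $\mu_{j+1} \le M_j$ of $\BH$, so the mean cannot increase).

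With monotonicity in hand, I would fix any index $j > p$. By definition of $p$ we have $M_{p+1} < 0$, and by monotonicity $M_j \le M_{p+1} < 0$, so Theorem~\ref{thm:carden} gives
\[
   \mathrm{Re}\,\theta_j \;\le\; M_j \;<\; 0.
\]
Hence $\theta_j$ lies strictly in the open left half-plane, i.e., $\theta_j$ is stable. Every unstable eigenvalue of $\VAV$ must therefore appear among $\theta_1, \ldots, \theta_p$, yielding at most $p$ unstable modes. The edge case $p = 0$ is handled identically: if $M_1 < 0$, then $\mathrm{Re}\,\theta_1 \le M_1 < 0$, so no eigenvalue of $\VAV$ has nonnegative real part. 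The edge case $p \ge k$ requires no argument, since $\VAV$ has only $k$ eigenvalues and the claim ``at most $p$ unstable modes'' is then vacuous.

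The main obstacle here is really nonexistent: all the substantive content has already been packed into Theorem~\ref{thm:carden} (itself drawn from~\cite{CE12}), and the corollary is a one-line bookkeeping consequence of the monotone sequence $\{M_j\}$ crossing zero at index $p$. If anything, the only step that deserves explicit mention is the trivial but essential observation that monotonicity of $M_j$ propagates the sign of $M_{p+1}$ to all $M_j$ with $j > p$.
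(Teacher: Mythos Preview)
Your argument is correct and matches the paper's approach: the paper states Corollary~\ref{cor:carden} without proof, treating it as an immediate consequence of the upper bound ${\rm Re}\,\theta_j \le M_j$ in Theorem~\ref{thm:carden} together with the monotonicity $M_1\ge M_2\ge\cdots\ge M_n$. Your write-up simply makes that one-line deduction explicit, including the edge cases, and nothing more is needed.
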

%%%%%%%%%%%%%%%%%%%%%%%%%%%%%%%%%%%%%%%%%%%%%%%%%%%%%%%%%%%%%%%%%%%%%%%%%%%%%%%%

Theorem~\ref{thm:carden} and Corollary~\ref{cor:carden} need not be sharp;
they only give an upper bound on the maximal number of unstable modes.
We have let $\BA$ and $\BV$ have general complex entries.
Additional assumptions on these matrices could lead to 
sharper bounds on the eigenvalues of $\VAV$.  Most basically, 
if $\BA$ and $\BV$ have real entries, then complex eigenvalues of $\VAV$ must 
occur in conjugate pairs.

\smallskip
The following theorem uses the eigenvalues $\{\mu_j\}$ of the Hermitian part $\BA$ 
to get a lower bound on the maximal number of unstable modes.

%%%%%%%%%%%%%%%%%%%%%%%%%%%%%%%%%%%%%%%%%%%%%%%%%%%%%%%%%%%%%%%%%%%%%%%%%%%%%%%%
\begin{theorem} \label{thm:lower}
Let $q$ denote the number of positive eigenvalues of $\BH = {1\over2}(\BA+\BAs^*)$,
with $0\le q \le n$.  

If $q\ge 1$, there exists a $q$-dimensional subspace of $\Cn$,
spanned by the orthonormal columns of $\BV\in\C^{n\times q}$, 
such that all eigenvalues of $\VAV$ are in the right half-plane.
\end{theorem}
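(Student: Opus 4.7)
The plan is to explicitly construct the subspace out of eigenvectors of $\BH$, and then argue that the Hermitian part of the projected matrix is positive definite, which forces every eigenvalue of the projected matrix into the open right half-plane by the same Rayleigh-quotient argument used in the discussion preceding Theorem~\ref{thm:carden}.

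Concretely, let $\mu_1 \ge \mu_2 \ge \cdots \ge \mu_q > 0 \ge \mu_{q+1} \ge \cdots \ge \mu_n$ be the eigenvalues of $\BH$, and let $\Bv_1,\ldots,\Bv_q \in \Cn$ be an orthonormal system of associated eigenvectors. I take $\BV = [\Bv_1 \ \cdots \ \Bv_q] \in \C^{n\times q}$, so $\BV^*\BV = \BI_q$. The first key observation is that the Hermitian part of $\VAV$ is exactly the compression of $\BH$ to this subspace:
\begin{equation*}
   \tfrac{1}{2}\bigl(\VAV + (\VAV)^*\bigr) = \BV^* \bigl(\tfrac{1}{2}(\BA+\BAs^*)\bigr) \BV = \BV^*\BH\BV = {\rm diag}(\mu_1,\ldots,\mu_q),
\end{equation*}
which is positive definite by the choice of $\BV$.

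The second step is to translate this into a statement about eigenvalues of $\VAV$. For any eigenpair $(\theta,\By)$ of $\VAV$ with $\|\By\|=1$, the same manipulation used to establish Proposition~\ref{prop:fov} gives
\begin{equation*}
   {\rm Re}\,\theta = \tfrac{1}{2}(\theta + \overline{\theta}) = \By^*\bigl(\tfrac{1}{2}(\VAV + (\VAV)^*)\bigr)\By = \By^* \, {\rm diag}(\mu_1,\ldots,\mu_q)\, \By \ge \mu_q > 0,
\end{equation*}
so every eigenvalue of $\VAV$ lies in the open right half-plane, as required.

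There is no real obstacle: the construction is explicit and the verification is a one-line Rayleigh-quotient computation once one notices that projection commutes with taking the Hermitian part. The only thing worth emphasizing is that this proof actually gives a slightly stronger conclusion, namely that every eigenvalue of $\VAV$ has real part at least $\mu_q$, the smallest positive eigenvalue of $\BH$; this mirrors the sharpness discussion after Corollary~\ref{cor:carden} and shows that the bound $q$ on the maximal number of unstable modes provided by the positive eigenvalues of $\BH$ is always attained by some subspace.
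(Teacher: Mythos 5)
Your proposal is correct and follows essentially the same route as the paper: both take $\BV$ to be the orthonormal eigenvectors of $\BH$ for its positive eigenvalues and show via the Rayleigh quotient that ${\rm Re}\,\theta = (\BV\By)^*\BH(\BV\By) = \sum_{j=1}^q \mu_j|y_j|^2 \ge \mu_q > 0$ for every eigenpair $(\theta,\By)$ of $\VAV$. Your phrasing through the diagonal Hermitian part $\BV^*\BH\BV = {\rm diag}(\mu_1,\ldots,\mu_q)$ is just a repackaging of the same computation, and the strengthened conclusion ${\rm Re}\,\theta \ge \mu_q$ is implicit in the paper's argument as well.
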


\begin{proof}
The proof follows from an explicit construction.
Once again denote the eigenvalues 
of $\BH$ by $\mu_1 \ge \mu_2 \ge \cdots \ge \mu_n$; label the associated
orthonormal eigenvectors by $\Bv_1, \Bv_2, \ldots, \Bv_n$.  
Construct the projection basis via
\[ \BV := [\Bv_1\ \Bv_2\ \cdots\ \Bv_q] \in \C^{n\times q},\]
with $\BV^*\BV=\BI$.
We seek to show that $\VAV  \in \C^{q\times q}$
has $q$ positive eigenvalues.
Each $\theta \in \sigma(\VAV)$ has a unit eigenvector
$\By = [y_1\ \cdots\ y_q]^T \in\C^q$ with $\VAV \By = \theta@\By$.  
Notice that
\[ \BV\By = \sum_{j=1}^q y_j \Bv_j\]
and
\[ \By^*\VAV\By = \theta@@\By^*\By = \theta.\]
Use these expressions to compute the real part of $\theta$:
\begin{eqnarray*}
{\rm Re}\, \theta
   = {\theta + \overline{\theta} \over 2}
   &\!\!\!=\!\!\!& {1\over2}
        \By^*\BV^*(\BA+\BAs^*)\BV\By   \\
   &\!\!\!=\!\!\!& (\BV\By)^* \BH @(\BV\By) 
   = \sum_{j=1}^q \mu_j |y_j|^2,
\end{eqnarray*}
where we have used the orthonormality of the eigenvector $\Bv_j$ 
of $\BH$ for this last step.
Since $\mu_1 \ge \cdots \ge \mu_q > 0$,
\[ 
{\rm Re}\, \theta
   = \sum_{j=1}^q \mu_j |y_j|^2
    \ge  \mu_q \sum_{j=1}^q |y_j|^2
    = \mu_q > 0.\]
Thus there exists $\BV\in\C^{n\times q}$ with orthonormal columns
for which all  $q$ eigenvalues of $\VAV$ are positive.
\hfill $\qed$
\end{proof}
%%%%%%%%%%%%%%%%%%%%%%%%%%%%%%%%%%%%%%%%%%%%%%%%%%%%%%%%%%%%%%%%%%%%%%%%%%%%%%%%

This theorem suggests one way to design orthogonal projection subspaces $\CV$
that yield ROMs with unstable modes, when the Hermitian part of $\BA$ has positive
eigenvalues.  In Section~\ref{sec:orth_bad}, we shall see a different approach
that takes $\CV$ to be a Krylov subspace.

\section{An upper bound on unstable modes for orthogonal projection ROMs (discrete time case)}
\label{sec:orth_disc}
%%%%%%%%%%%%%%%%%%%%%%%%%%%%%%%%%%%%%%%%%%%%%%%%%%%%%%%%%%%%%%%%%%%%%%%%%%%%%%%%
A bound akin to Corollary~\ref{cor:carden} holds for 
orthogonal projection ROMs for the discrete-time system
\begin{eqnarray}
{\Bx}_{k+1} &\!\!\!=\!\!\!& \BA\Bx_k + \Bb@ u_k \label{eq:xk} \\
y_{k+1}         &\!\!\!=\!\!\!& \Bc^*\Bx_k + d@ u_k. \label{eq:yk}
\end{eqnarray}
Again let the columns of $\BV\in\Cnk$ form an orthonormal basis for
the subspace $\CV$.
Now assume that $\BA$ is stable in the discrete-time sense, i.e.,
the spectral radius $\rho(\BA)$ is less than one.
What can be said of the spectrum of $\VAV$?

In Section~\ref{sec:orth_cont}, arithmetic means of the eigenvalues 
of the Hermitian part of $\BA$ bounded the real parts of the eigenvalues of $\VAV$.
Now, \emph{geometric} means of the \emph{singular values} of $\BA$ will
bound the \emph{magnitudes} of the eigenvalues of $\VAV$.

\medskip
Let $s_1,\ldots, s_n$ denote the singular values of $\BA$.
We recall the following result from~\cite[Theorem~2.3]{CE12}.

\begin{theorem} \label{thm:carden2}
Denote the eigenvalues of $\VAV\in \Ckk$ by
$\theta_1,\ldots, \theta_k$, 
labeled by decreasing magnitude:
\[ |\theta_1| \ge |\theta_2| \ge \cdots \ge |\theta_k|.  \]
Let $G_j$ denote the geometric mean of the $j$ largest 
singular values of $\BA$,
\begin{equation} \label{eq:Gj}
  G_j := \big(s_1@\cdots s_j\big)^{1/j}, \rlap{\quad$1\le j\le n$.}
\end{equation}
Then 
\[ |\theta_j| \le G_j, \rlap{\kern32pt $1\le j\le n$.}\]
\end{theorem}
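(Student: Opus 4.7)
The plan is to bound the product $|\theta_1 \cdots \theta_j|$ by the product of the $j$ largest singular values of $\BA$, and then deduce the bound on the single modulus $|\theta_j|$ via the ordering.

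First, I would invoke Weyl's multiplicative inequality applied to the $k\times k$ matrix $\VAV$: for any square matrix $\BM$ with eigenvalues ordered by decreasing modulus and singular values ordered by decreasing magnitude, one has
\[
\prod_{i=1}^{j}|\lambda_i(\BM)| \;\le\; \prod_{i=1}^{j}\sigma_i(\BM), \qquad 1\le j\le \dim\BM.
\]
Specializing $\BM=\VAV$ yields $|\theta_1\cdots\theta_j| \le \sigma_1(\VAV)\cdots\sigma_j(\VAV)$.

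Second, I would show that the singular values of the compression $\VAV$ are dominated termwise by those of $\BA$:
\[
\sigma_i(\VAV) \;\le\; \sigma_i(\BA) \;=\; s_i, \qquad 1\le i\le k.
\]
Because $\BV^*\BV=\BI$, the operator $\BV$ is an isometry and $\BV^*$ is a contraction. Applying the Courant--Fischer minimax characterization of singular values, left multiplication by the contraction $\BV^*$ cannot increase any $\sigma_i$, and right multiplication by the isometry $\BV$ restricts the domain to a $k$-dimensional subspace, which again cannot increase any $\sigma_i$. This is the standard compression--interlacing inequality for singular values.

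Third, I would chain the two inequalities:
\[
\prod_{i=1}^{j}|\theta_i| \;\le\; \prod_{i=1}^{j}\sigma_i(\VAV) \;\le\; \prod_{i=1}^{j}s_i \;=\; G_j^{\,j}.
\]
Since the $\theta_i$ are indexed in decreasing order of modulus, $|\theta_j|^j \le |\theta_1\cdots\theta_j|$, and taking the $j$th root gives the desired $|\theta_j| \le G_j$.

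The main obstacle is justifying the compression step $\sigma_i(\VAV)\le s_i$ cleanly; everything else is bookkeeping. One must be careful that both one-sided compressions (the isometric embedding $\BV$ on the right and the contraction $\BV^*$ on the left) decrease singular values in the appropriate sense. The cleanest route is to write $\sigma_i(\VAV)^2$ via Courant--Fischer as a min--max of $\|\VAV\Bx\|^2 = \|\BV^*(\BA\BV\Bx)\|^2$ over unit vectors $\Bx\in\C^k$, observe $\|\BV^*\By\| \le \|\By\|$ for all $\By$ so that the quantity is bounded by $\|\BA\BV\Bx\|^2$, and then recognize the latter as a min--max over the $k$-dimensional subspace $\mathrm{range}(\BV)\subset\C^n$, which is dominated by the full $\C^n$ min--max that defines $\sigma_i(\BA)^2$. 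Weyl's inequality itself is classical and can simply be cited.
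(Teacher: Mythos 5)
Your argument is correct and is essentially the proof given in the cited source \cite[Theorem~2.3]{CE12} (the paper itself only quotes the result): Weyl's multiplicative majorization $\prod_{i\le j}|\theta_i|\le\prod_{i\le j}\sigma_i(\VAV)$, the compression inequality $\sigma_i(\VAV)\le s_i$ (which follows cleanly from $\sigma_i(\BX\BY)\le\sigma_i(\BX)\sigma_1(\BY)$ with $\sigma_1(\BV)=\sigma_1(\BV^*)=1$, equivalent to your Courant--Fischer route), and the ordering $|\theta_j|^j\le|\theta_1\cdots\theta_j|$. No gaps; note only that the range of $j$ in the conclusion should read $1\le j\le k$, since $\VAV$ has just $k$ eigenvalues.
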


This theorem immediately gives a bound on the number of unstable modes
in a discrete-time ROM generated via orthogonal projection.

\begin{corollary} \label{cor:carden2}
Given the notation of Theorem~\ref{thm:carden2}, 
let $p$ denote the largest index for which $G_j\ge 1$, 
taking $p=0$ if $G_1 < 1$.

The orthogonal projection ROM $\VAV$ for the discrete-time 
system~$(\ref{eq:xk})$--$(\ref{eq:yk})$ 
can never have more than $p$ unstable modes
(i.e., eigenvalues with magnitude at least one).
\end{corollary}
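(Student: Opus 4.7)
The plan is to derive Corollary~\ref{cor:carden2} as an immediate consequence of Theorem~\ref{thm:carden2}, using only the monotonicity of the geometric means $G_j$.

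First, I would record the monotonicity fact $G_1 \ge G_2 \ge \cdots \ge G_n$. This follows because $s_{j+1}$ is no larger than any of $s_1, \ldots, s_j$ (the singular values are labeled in decreasing order), and hence no larger than their geometric mean $G_j$; inserting $s_{j+1}\le G_j$ into the defining product shows $G_{j+1}\le G_j$. This monotonicity guarantees that the index $p$ in the statement is well-defined: either $G_1<1$ (so $p=0$) or there is a largest $p\ge 1$ with $G_p\ge 1$, and then $G_j < 1$ for all $j>p$.

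Next I would argue by contradiction. Suppose $\VAV$ had at least $p+1$ unstable modes, i.e., at least $p+1$ eigenvalues of magnitude $\ge 1$. Ordering the eigenvalues as in Theorem~\ref{thm:carden2} by decreasing magnitude, this would force $|\theta_{p+1}|\ge 1$. But Theorem~\ref{thm:carden2} gives
\[
   |\theta_{p+1}| \,\le\, G_{p+1} \,<\, 1,
\]
a contradiction. Hence at most $p$ eigenvalues of $\VAV$ can satisfy $|\theta|\ge 1$. The boundary case $p=0$ is subsumed: if $G_1<1$ then $|\theta_1|\le G_1 <1$, so no eigenvalue is unstable.

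There is really no obstacle here; the content of the corollary is entirely in Theorem~\ref{thm:carden2}, and the only nontrivial observation is the monotonicity of $G_j$, which is a one-line consequence of the decreasing ordering of the singular values. The proof as written will therefore be quite short, parallel in structure to the continuous-time Corollary~\ref{cor:carden}.
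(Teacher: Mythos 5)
Your argument is correct and is exactly the intended deduction: the paper offers no separate proof, presenting the corollary as an immediate consequence of Theorem~\ref{thm:carden2}, and your contradiction via $|\theta_{p+1}|\le G_{p+1}<1$ is precisely that one-line argument. The monotonicity observation is fine (and worth stating), though strictly speaking the definition of $p$ as the \emph{largest} index with $G_j\ge 1$ already gives $G_{p+1}<1$ without it.
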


\begin{example}
To illustrate the bounds in Theorem~\ref{thm:carden2} and Corollary~\ref{cor:carden2},
consider the stable matrix
\begin{equation} \label{eq:Adisc}
 \BA = \left[\begin{array}{ccccc}
     1/2 & \gamma \\
     1/8 & 1/2 & \gamma^2 \\
     & 1/8 & \ddots  & \ddots \\
     & & \ddots & 1/2  & \gamma^{n-1} \\
     & & & 1/8 & 1/2
   \end{array}\right]
\end{equation}
with $\gamma=3/4$ and dimension $n=128$.
This matrix is stable, with $\rho(\BA) = 0.94822\ldots,$
but the numerical range extends beyond the unit disk, with
the numerical radius $\mu(\BA) = 1.09127\ldots.$
What can be said of the stability of associated ROMs?

Label the eigenvalues of $\VAV \in \Ckk$ as 
$\theta_1,\ldots, \theta_k$, ordered by decreasing magnitude:
\[ |\theta_1| \ge |\theta_2| \ge \cdots \ge |\theta_k|.\]
For each of these eigenvalues of $\VAV$ we know
\[ \theta_j\in W(\BA) \quad \mbox{and}\quad |\theta_j| \le G_j.\]
Figure~\ref{fig:discrete} shows the regions
\[\Omega_j := W(\BA) \cap \{z\in\C: |z|\le G_j\} \]
for $j=1,\ldots,4$.
By the monotonicity of the $G_j$ values, these sets are
nested: 
\[ \Omega_k \subseteq \cdots \subseteq \Omega_{2} \subseteq \Omega_1.\]
To five digits, we compute
\begin{center}\begin{tabular}{l}
$G_{1} = 1.13227$ \\
$G_{2} = 0.99258$ \\
$G_{3} = 0.90029$ \\
$G_{4} = 0.83738$.
\end{tabular} \end{center}
Since $W(\BA)$ extends beyond the unit circle, it is possible that
$|\theta_1|>1$.  However, since $G_2 < 1$, Corollary~\ref{cor:carden2} 
ensures that \emph{all other eigenvalues $\theta_2, \ldots, \theta_k$ of $\VAV$
must be contained within the unit disk: $\VAV$ can have \emph{at most one unstable mode}.}
\end{example}

%%%%%%%%%%%%%%%%%%%%%%%%%%%%%%%%%%%%%%%%%%%%%%%%%%%%%%%%%%%%%%%%%%%%%%%%%%%%%%%%
\begin{figure}
\vspace*{1.5em}
\includegraphics[scale=0.24]{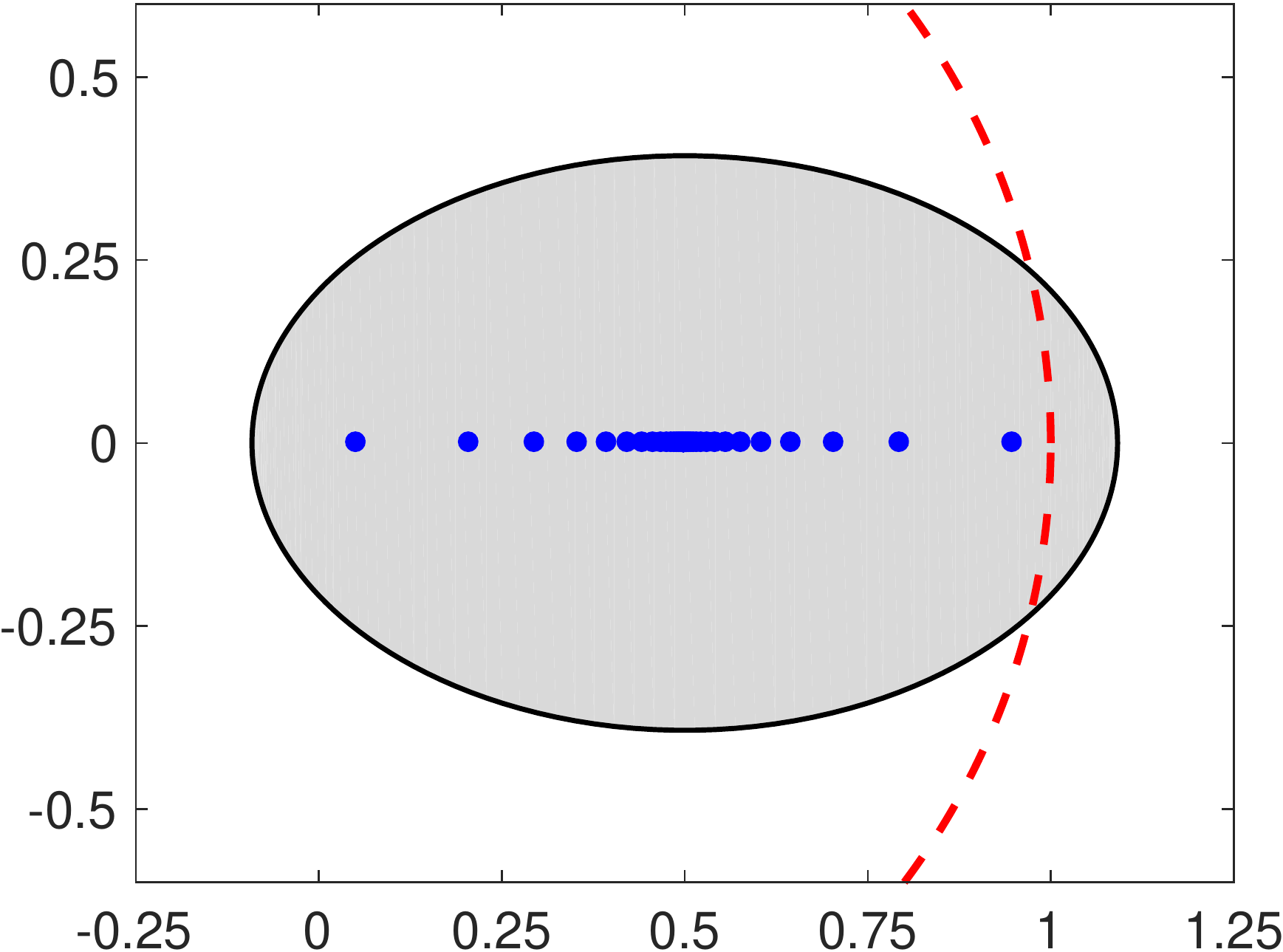}\quad
\includegraphics[scale=0.24]{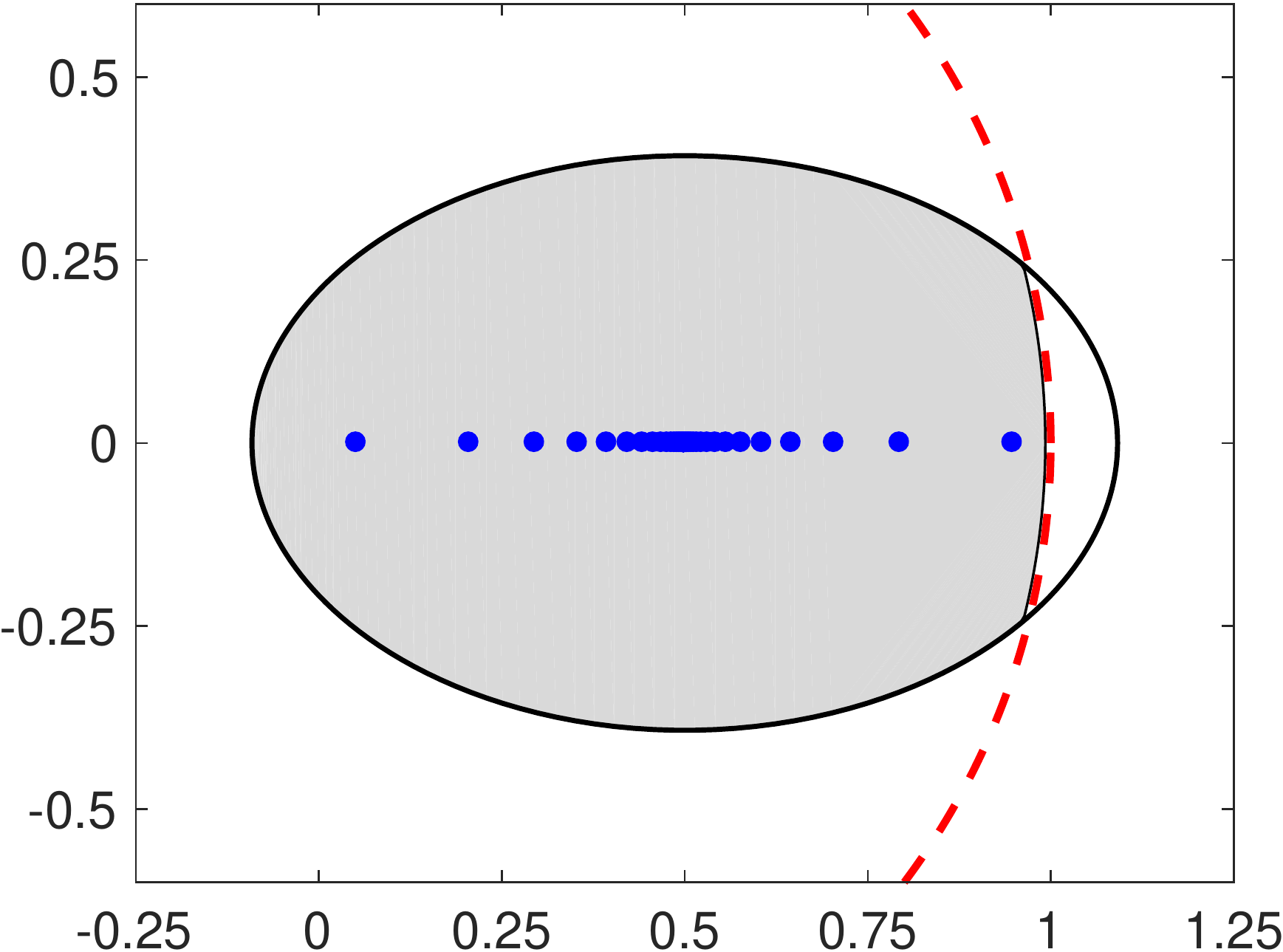}

\begin{picture}(0,0)
\put(63,40){\footnotesize $\theta_1$}
\put(195,40){\footnotesize $\theta_2$}
\end{picture}

\vspace*{1em}
\includegraphics[scale=0.24]{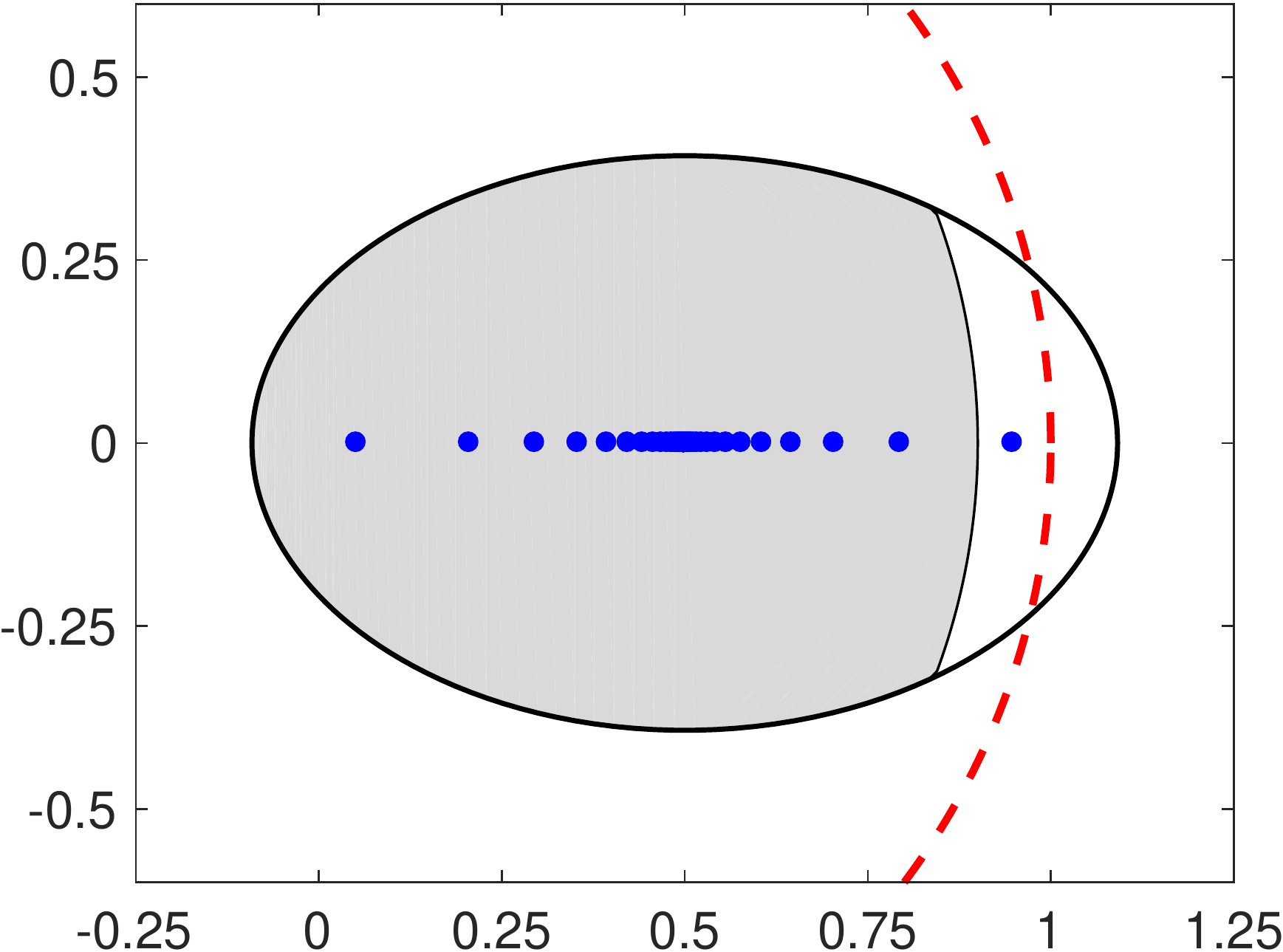}\quad
\includegraphics[scale=0.24]{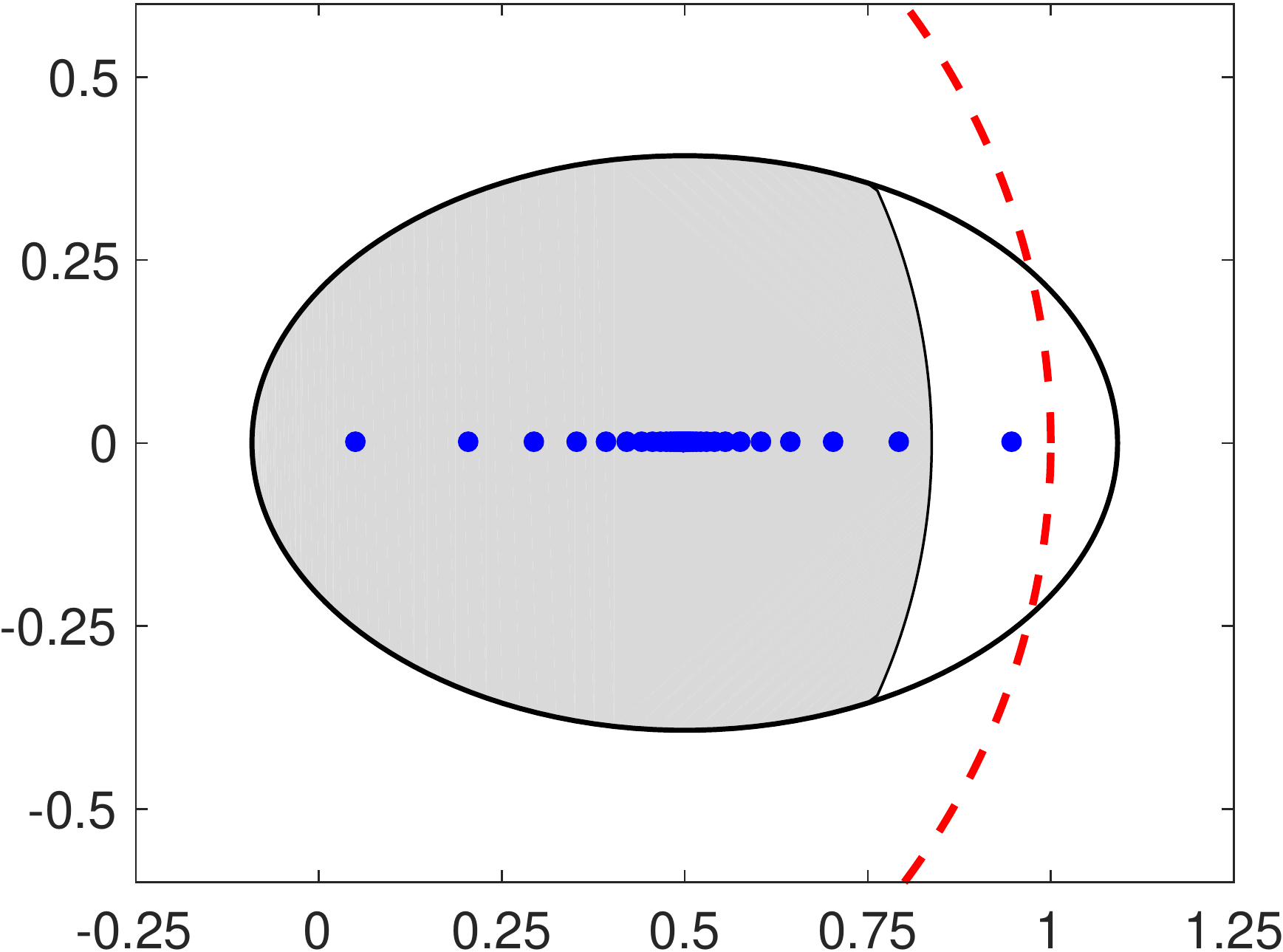}

\begin{picture}(0,0)
\put(63,40){\footnotesize $\theta_3$}
\put(195,40){\footnotesize $\theta_4$}
\end{picture}

\vspace*{-1em}
\caption{\label{fig:discrete}
Illustration of Theorem~\ref{thm:carden2} and Corollary~\ref{cor:carden2}
applied to the matrix~(\ref{eq:Adisc}).
In each plot, the black oval shows the boundary of $W(\BA)$;
the gray region shows $\Omega_j$, which must contain $\theta_j$;
the blue dots show the eigenvalues of $\BA$,
and the red dashed line shows the boundary of the unit disk. Since $\Omega_j$ is contained in
the unit disk for $j>1$, Corollary~\ref{cor:carden2} ensures $\VAV$ 
has at most one unstable mode.}
\end{figure}

%%%%%%%%%%%%%%%%%%%%%%%%%%%%%%%%%%%%%%%%%%%%%%%%%%%%%%%%%%%%%%%%%%%%%%%%%%%%%%%%

%%%%%%%%%%%%%%%%%%%%%%%%%%%%%%%%%%%%%%%%%%%%%%%%%%%%%%%%%%%%%%%%%%%%%%%%%%%%%%%%
\section{Orthogonal projection: adversarial construction} \label{sec:orth_bad}
%%%%%%%%%%%%%%%%%%%%%%%%%%%%%%%%%%%%%%%%%%%%%%%%%%%%%%%%%%%%%%%%%%%%%%%%%%%%%%%%

The last two sections describe how, for a given $\BA$, one 
can get rigorous limits on the number of unstable modes in a ROM
constructed using orthogonal projection from a generic subspace.
Here we describe a construction for probing extreme 
limits of instability, provided one is content to only fix the eigenvalues of $\BA$
but let the departure from normality vary.
This result was proved by Duintjer Tebbens and Meurant~\cite[Corollary~2.3]{DM12}, 
a contribution to the convergence theory for Arnoldi's algorithm for 
computing eigenvalues; it builds on earlier work of 
Greenbaum, Pt\'ak, and Strakos~\cite{GS94,GPS96}.
% addressing convergence of the GMRES algorithm for solving $\BA\Bx=\Bb$.

%%%%%%%%%%%%%%%%%%%%%%%%%%%%%%%%%%%%%%%%%%%%%%%%%%%%%%%%%%%%%%%%%%%%%%%%%%%%%%%%
%\begin[Duintjer Tebbens and Meurant~\cite{DM12}]{theorem} \label{thm:dtm}
\begin{theorem} \label{thm:dtm}
Let $\Sigma = \{\lambda_1, \ldots, \lambda_n\}\subset \C$ 
denote a collection of desired eigenvalues, and specify any values for
\begin{eqnarray*}
\Sigma_1 \!\!\!&:=&\!\!\! \{\theta_1^{(1)}\};\\[.5em]
\Sigma_2 \!\!\!&:=&\!\!\! \{\theta_1^{(2)}, \theta_2^{(2)}\};\\[.5em]
\Sigma_3 \!\!\!&:=&\!\!\! \{\theta_1^{(3)}, \theta_2^{(3)}, \theta_3^{(3)}\};\\[.25em]
\quad \!\!\!&\vdots&\!\!\! \\[.25em]
\Sigma_{n-1} \!\!\!&:=&\!\!\! \{\theta_1^{(n-1)}, \theta_2^{(n-1)}, \ldots, \theta_{n-1}^{(n-1)}\}.
\end{eqnarray*}
There exists a matrix $\BA$ and a vector $\Bb$ such that
\[ \sigma(\BA) = \Sigma = \{\lambda_1,\ldots, \lambda_n\}\]
and, for $k=1,\ldots, n-1$,
\[ \sigma(\VkAVk) = \Sigma_k = \{\theta_1^{(k)}, \ldots, \theta_{k}^{(k)}\},\]
where the columns of $\BVs_k$ form an orthonormal basis for 
the Krylov subspace ${\rm range}(\BVs_k) = \CK_k(\BA,\Bb)$.
\end{theorem}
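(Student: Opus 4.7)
The plan is to exploit the following structural reduction: take $\Bb = \Be_1$ and choose $\BA\in\C^{n\times n}$ to be an \emph{unreduced} upper Hessenberg matrix, meaning every subdiagonal entry $h_{j+1,j}$ is nonzero. Under this choice, $\BAs^{j-1}\Be_1$ has a nonzero component along $\Be_j$ and vanishes beyond, so $\CK_k(\BA,\Be_1) = {\rm span}\{\Be_1,\ldots,\Be_k\}$. Picking the orthonormal basis $\BVs_k = [\Be_1\ \cdots\ \Be_k]$ (any other orthonormal basis yields a unitarily equivalent $\VkAVk$, hence the same spectrum), I obtain $\VkAVk = \BA_k$, the leading $k\times k$ principal submatrix of $\BA$. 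So the theorem reduces to constructing an unreduced upper Hessenberg $\BA$ whose leading principal submatrix $\BA_k$ has characteristic polynomial $p_k(z) := \prod_{j=1}^{k}(z - \theta_j^{(k)})$ for every $k = 1,\ldots,n$ (setting $\theta_j^{(n)} := \lambda_j$ so that $p_n$ matches the prescribed $\Sigma$).

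To carry this out, I would build $\BA$ one column at a time. Expanding $\det(zI - \BA_k)$ by cofactors along the last column and exploiting the block-triangular structure produced by the Hessenberg zero pattern yields the recurrence
\begin{equation*}
  p_k(z) \;=\; (z - h_{k,k})\,p_{k-1}(z) \;-\; \sum_{i=1}^{k-1} h_{i,k}\,\bigg(\prod_{m=i+1}^{k} h_{m,m-1}\bigg)\,p_{i-1}(z),
\end{equation*}
with $p_0(z) \equiv 1$. The polynomials $p_0, p_1, \ldots, p_{k-1}$ are monic of consecutive degrees $0, 1, \ldots, k-1$, so they form a basis for the space of polynomials of degree less than $k$. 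Since $p_k(z) - z\,p_{k-1}(z)$ has degree at most $k-1$ (the leading $z^k$ terms cancel), it admits a unique expansion $\sum_{j=0}^{k-1} c_j\,p_j(z)$. Matching coefficients of $p_j$ in the recurrence yields $h_{k,k} = -c_{k-1}$ together with
\begin{equation*}
  h_{j+1,k} \;=\; \frac{-c_j}{\prod_{m=j+2}^{k} h_{m,m-1}}\,, \qquad j = 0, 1, \ldots, k-2.
\end{equation*}

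The only delicate point is that each denominator must be nonzero and that the new subdiagonal $h_{k+1,k}$ must itself be nonzero for the next iteration. Both are handled by simply fixing every subdiagonal $h_{m,m-1}$ to an arbitrary nonzero value (say, $1$) before solving for the remaining entries of its column. With this choice the construction never breaks down, and iterating from $k = 1$ (where $h_{1,1} = \theta_1^{(1)}$) up through $k = n$ produces an unreduced upper Hessenberg $\BA$ realizing all of the prescribed spectra. The main obstacle---showing that each recurrence step has enough flexibility to realize any prescribed monic polynomial $p_k$---is resolved by the two-fold freedom in each column: the $k$ entries $h_{1,k},\ldots,h_{k,k}$ match the $k$ nontrivial coefficients of $p_k$, while the extra parameter $h_{k+1,k}$ is held in reserve to drive the following step.
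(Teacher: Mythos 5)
Your construction is correct: reducing to an unreduced upper Hessenberg matrix with $\Bb=\Be_1$, so that $\VkAVk$ is the leading principal $k\times k$ submatrix, and then determining each column from the Hessenberg determinant recurrence for the characteristic polynomials (with every subdiagonal entry fixed to $1$ in advance so that no denominator vanishes and the matrix stays unreduced) is exactly the mechanism behind this result. The paper itself gives no proof---it defers to Duintjer Tebbens and Meurant---but your argument is a faithful, self-contained reconstruction of their construction; indeed, applied to the data of Example~\ref{ex:dm12} it reproduces the displayed matrix verbatim.
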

%%%%%%%%%%%%%%%%%%%%%%%%%%%%%%%%%%%%%%%%%%%%%%%%%%%%%%%%%%%%%%%%%%%%%%%%%%%%%%%%

We can use this theorem (and its constructive proof) to build stable $\BA$
and corresponding $\Bb$ for which \emph{all eigenvalues of the orthogonal 
projection ROMs $\VkAVk$ from the Krylov subspace $\CK_k(\BA,\Bb)$ fall at
any desired location in the right half-plane}, for $k=1,\ldots, n-1$, despite
the fact that these ROMs all match $k$ moments.
The next example illustrates this point.

%%%%%%%%%%%%%%%%%%%%%%%%%%%%%%%%%%%%%%%%%%%%%%%%%%%%%%%%%%%%%%%%%%%%%%%%%%%%%%%%
\begin{figure}[b!]
\begin{center}
\includegraphics[scale=0.55]{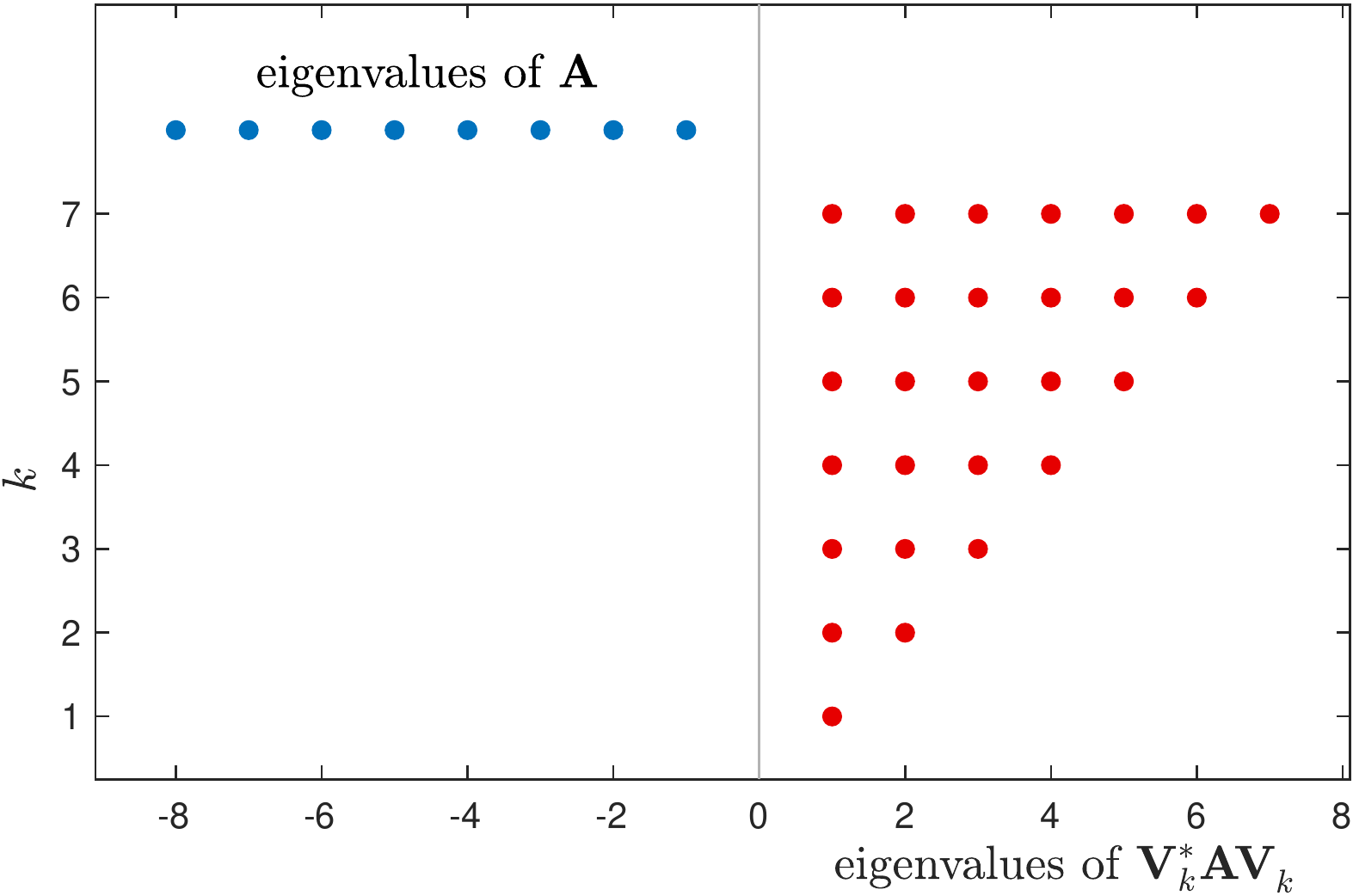}
\end{center}

\vspace*{-10pt}
\caption{ \label{fig:DM12ex}
Comparison of the eigenvalues of $\BA$ (blue dots, in the left half-plane)
to those of $\VkAVk$ of order $k=1,\ldots, 7$ for the system in Example~\ref{ex:dm12},
generated using orthogonal projection onto Krylov subspaces.
These ROMs are entirely unstable, even through 
the $k$th order ROM matches $k$ moments of the original system.
}
\end{figure}
%%%%%%%%%%%%%%%%%%%%%%%%%%%%%%%%%%%%%%%%%%%%%%%%%%%%%%%%%%%%%%%%%%%%%%%%%%%%%%%%

%%%%%%%%%%%%%%%%%%%%%%%%%%%%%%%%%%%%%%%%%%%%%%%%%%%%%%%%%%%%%%%%%%%%%%%%%%%%%%%%
\begin{example} \label{ex:dm12}
Using the construction described by Duintjer Tebbens and Meurant~\cite[Proposition~2.1]{DM12},
we form 
\[ \BA = \left[\!\begin{array}{cccccccr}
1 & 0 & 0 & 0 & 0 & 0 & 0\!\! &  -362880    \\
1 & 2 & 0 & 0 & 0 & 0 & 0\!\! &  -1451520    \\
  & 1 & 3 & 0 & 0 & 0 & 0\!\! &  -1693440    \\
  &   & 1 & 4 & 0 & 0 & 0\!\! &  -846720  \\
  &   &   & 1 & 5 & 0 & 0\!\! &  -211680\\
  &   &   &   & 1 & 6 & 0\!\! &  -28224 \\
  &   &   &   &   & 1 & 7\!\! &  -2016 \\
  &   &   &   &   &   & 1\!\! &  -64 \\
\end{array}\!\right],
\ 
 \Bb = \left[\!\begin{array}{c}
 1 \\ 0 \\ 0 \\ 0 \\ 0 \\ 0 \\ 0 \\ 0
\end{array}\!\right].
\]
The matrix $\BA$ was constructed to have the stable spectrum
\[ \sigma(\BA) = \{-1, -2, -3, -4, -5, -6, -7, -8\}.\]
The form of $\BA$ and $\Bb$ makes it easy to write down the associated Krylov subspace,
\[ {\rm range}(\BVs_k) = \CK_k(\BA,\Bb) = {\rm span}\{\Be_1, \ldots, \Be_k\},\]
so that $\VkAVk$ is the $k\times k$ principal submatrix of $\BA$.
Since these submatrices are lower triangular, one can easily read off their eigenvalues:
\[ \sigma(\VkAVk) = \{ 1,\ldots, k\}.\]
Though the $k$th order ROM matches $k$ moments of the stable system,
all $k$ modes are unstable.  
Figure~\ref{fig:DM12ex} contrasts these unstable modes with the 
stable eigenvalues of $\BAs$.

%%%%%%%%%%%%%%%%%%%%%%%%%%%%%%%%%%%%%%%%%%%%%%%%%%%%%%%%%%%%%%%%%%%%%%%%%%%%%%%%
\begin{figure}[b!]
\begin{center}
\includegraphics[scale=0.45]{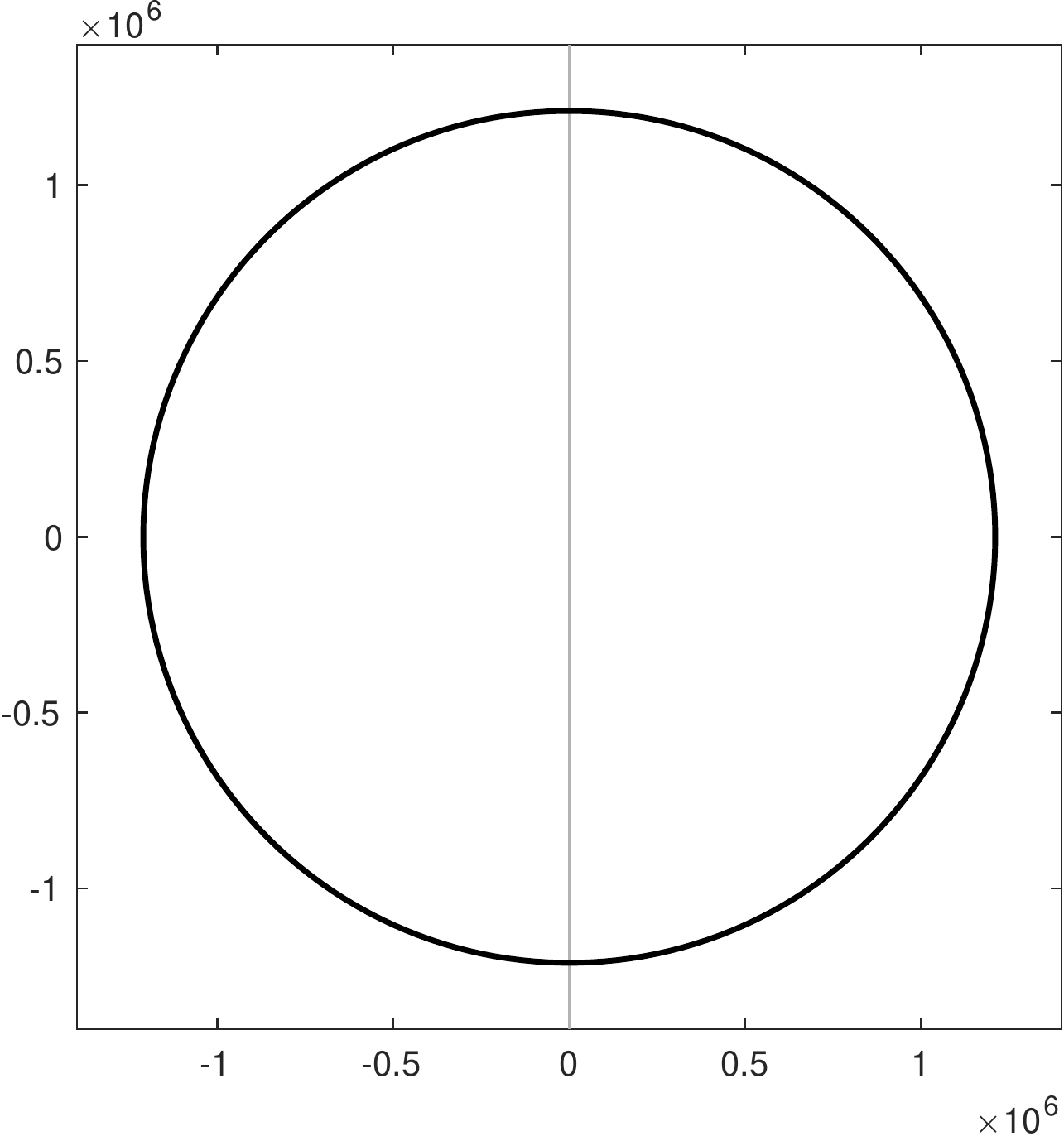}

\begin{picture}(0,0)
\put(50,40){$W(\BA)$}
\end{picture}

\includegraphics[scale=0.5]{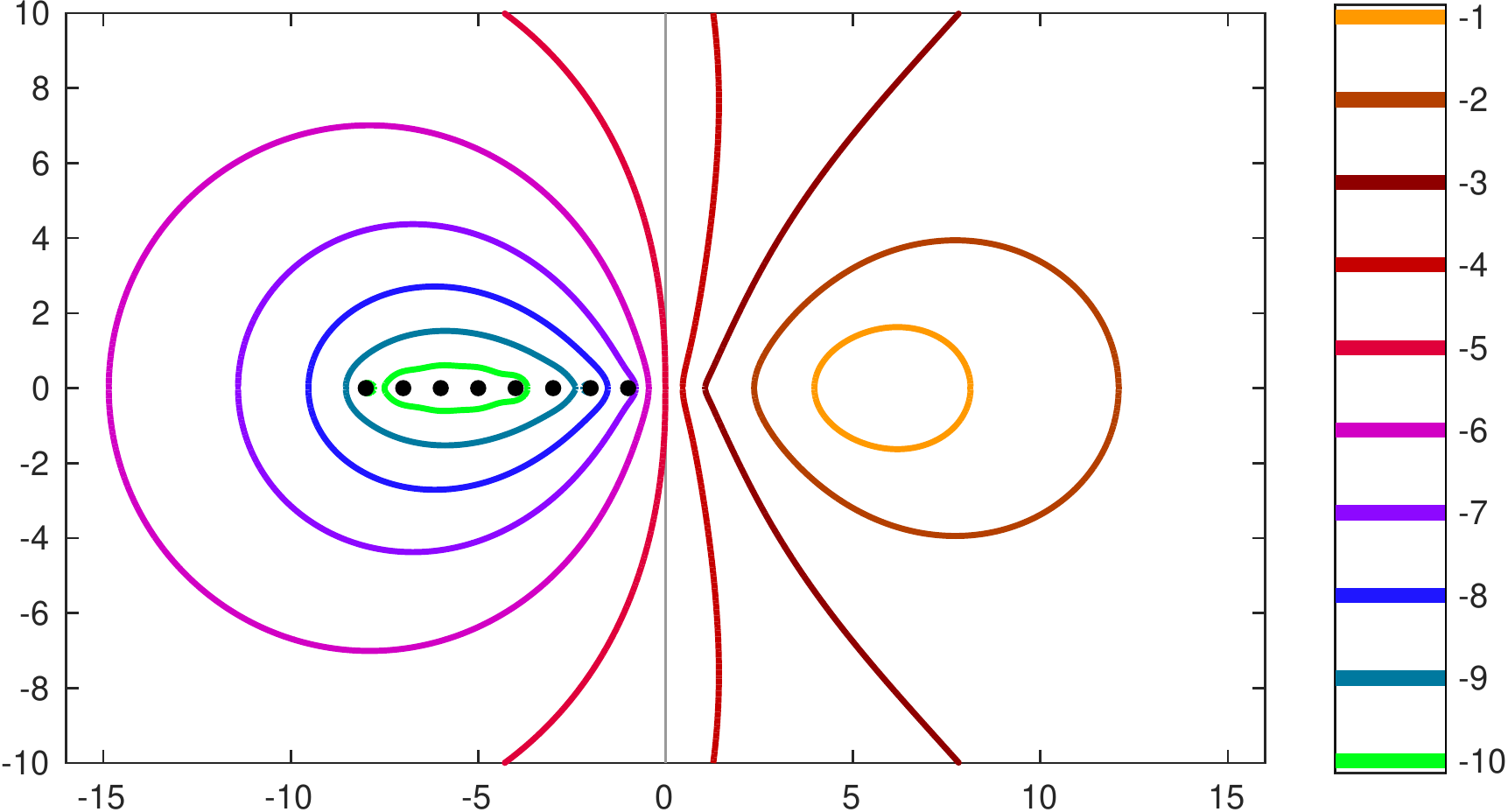}

\begin{picture}(0,0)
\put(50, 30){$\sigma_\eps(\BA)$}
\end{picture}
\end{center}

\vspace*{-22pt}
\caption{ \label{fig:DM12ex2}
The numerical range $W(\BA)$ (top) and $\eps$-pseudospectra $\sigma_\eps(\BA)$ (bottom)
for the stable $\BA$ from Example~\ref{ex:dm12}.
Since $W(\BA)$ extends far into the right half-plane, solutions to 
$\dot\Bx(t)=\BA\Bx(t)$ can exhibit significant transient growth before
asymptotic decay.  
In the bottom plot (computed using EigTool~\cite{Wri02a}), 
the lines show the boundaries of $\sigma_\eps(\BA)$, with colors 
corresponding to $\log_{10}(\eps)$. 
For $\eps=10^{-4}$, $\sigma_\eps(\BA)$ clearly extends into the right half-plane:
there exist nearby matrices $\BA+\BE$ that are unstable, with $\|\BE\| < 10^{-4}$.
}
\end{figure}
%%%%%%%%%%%%%%%%%%%%%%%%%%%%%%%%%%%%%%%%%%%%%%%%%%%%%%%%%%%%%%%%%%%%%%%%%%%%%%%%

This construction can deliver such startling results because 
\emph{we only specify the eigenvalues of $\BA$}.
One might suspect that the $\BA$ this construction produces might
have a significant departure from normality, corresponding 
to transient growth of the dynamical system and eigenvalue instability.
Indeed, Figure~\ref{fig:DM12ex2} confirms this departure from normality.
The numerical range $W(\BA)$ extends beyond $10^6$ into the right half-plane
($\omega(\BA) \approx 1.211 \times 10^6$), signaling rapid growth 
of $\|\eop^{t\BA}\|$ for small $t$; see~(\ref{eq:t0}).
The $\eps$-pseudospectra reveal that $\BA$ is close to an unstable system:
since $\sigma_\eps(\BA)$ extends into the right half-plane for 
$\eps=10^{-4}$, there exist matrices $\BE\in\C^{8\times 8}$ with
$\|\BE\|<10^{-4}$ that make $\BA+\BE$ unstable.

In short, this pathological example corresponds to a special
$\BA$ with unusual dynamics and a fragile spectrum.
(Our $\Sigma$ and $\Sigma_k$ are inspired by an ill-conditioned 
pole placement example of Mehrmann and Xu~\cite[Example~2]{MX96}.)
\end{example}

%%%%%%%%%%%%%%%%%%%%%%%%%%%%%%%%%%%%%%%%%%%%%%%%%%%%%%%%%%%%%%%%%%%%%%%%%%%%%%%%
\section{Oblique projection: adversarial construction} \label{sec:oblique}
%%%%%%%%%%%%%%%%%%%%%%%%%%%%%%%%%%%%%%%%%%%%%%%%%%%%%%%%%%%%%%%%%%%%%%%%%%%%%%%%

Like the orthogonal projection methods addressed in the previous sections,
oblique projection methods approximate the true state vector $\Bx(t) \approx \BV\xhat(t) \in \CV$,
for some $k$-dimensional subspace ${\rm range}(\BV) = \CV$.  
Oblique projection methods replace the orthogonality constraint~(\ref{eq:Galerkin})
with the \emph{Petrov--Galerkin condition}
\begin{equation} \label{eq:PG}
 \BW^*\Big(\BV@\dot{\xhat}(t) - \big(\BA\BV@\xhat(t) + \Bb @u(t)\big)\Big) = \Bzero,
\end{equation}
where ${\rm range}(\BW) = \CW$ is some (generally different) $k$-dimensional subspace.
The bases for $\CV$ and $\CW$ stored in the columns of $\BV$ and $\BW$ are now 
constructed to be \emph{biorthogonal}: $\BW^*\BV = \BI$.\ \ 
The Petrov--Galerkin constraint~(\ref{eq:PG}) gives the reduced system
\begin{equation} \label{eq:oblique}
 \dot{\xhat}(t) = (\WAV) @@\xhat(t) + (\BW^*\Bb) @u(t).
\end{equation}

The balanced truncation method
(see, e.g., \cite[Chapter~7]{Ant05b}, \cite[Chapter~7]{Zho95})
fits this template, and generates models that are guaranteed to preserve stability.
However, to compute the balancing biorthogonal bases $\BV$ and $\BW$ 
one  must solve two Lyapunov matrix equations 
(typically at considerable computation expense, though algorithmic improvements
make this increasingly tractable for large-scale problems; see, e.g., \cite{BS13,Pen00a,Sab06,Sim16}).

The bi-Lanczos algorithm provides an inexpensive alternative for constructing
oblique projection models.  
This method has the advantage that the biorthogonal bases can be computed using 
three-term vector recurrences that only require multiplication by $\BA$ and $\BAs^*$ 
to construct biorthogonal bases for $\CK_k(\BA,\Bb)$ and $\CK_k(\BAs^*,\Bc)$ 
(that form the columns of $\BV$ and $\BW$).

The resulting order-$k$ bi-Lanczos ROM \emph{matches $2k$ moments}
of the transfer function, while orthogonal projection onto the Krylov subspace 
$\CK_k(\BA,\Bb)$ produces a ROM that only matches $k$ moments~\cite[Section~11.2]{Ant05b}.
Moreover, the three-term recurrence behind bi-Lanczos makes the bases $\BV$ and $\BW$
quicker to compute than the orthonormal basis for $\CV$ required by the
orthogonal projection method (which uses long recurrences in the Gram--Schmidt process).

Despite these advantages, the bi-Lanczos method often
suffers from significant numerical instability.
While the bases that form the columns of $\BV$ and $\BW$ are biorthogonal,
the columns of these two matrices might themselves be quite ill-conditioned
bases for $\CK_k(\BA,\Bb)$ and $\CK_k(\BAs^*,\Bc)$.
In extreme cases, the method can \emph{break down}.
More often the iterations come close to failure, 
exhibiting numerical instabilities; see, e.g., \cite{PTL85}. 
The problem is apparent even when $k=1$.
Suppose that $\Bc^*\Bb=0$,
as could easily occur in a physical system where the input occurs at a point far 
from the output measurement.
In this case there exists no biorthogonal bases for 
$\CV = \CK_1(\BA,\Bb) = {\rm span}(\Bb)$ and 
$\CW = \CK_1(\BAs^*,\Bc) = {\rm span}(\Bc)$.

When the bi-Lanczos procedure succeeds without breakdown, 
it produces the factorizations
\begin{subequations}
\begin{eqnarray} 
    \BA\BVs_k^{} \!\!\!&=& \!\!\!\BVs_k^{} \BT_k^{} + \gamma_k^{} \Bv_{k+1}^{} \Be_k^* \label{eq:bilancV} \\
    \BAs^*\BWs_k^{} \!\!\!&=& \!\!\!\BWs_k^{} \BT_k^* + \overline{\beta_k^{}} \Bw_{k+1}^{} \Be_k^*; \label{eq:bilancW}
\end{eqnarray}
\end{subequations}
premultiplying~(\ref{eq:bilancV}) by $\BW_k^*$ and using the biorthogonality of 
the basis yields 
\[ \WkAVk = \BT_k
    = 
   \left[\!\!\begin{array}{cccc}
     \alpha_1 & \beta_1 \\
     \gamma_1 & \alpha_2 & \ddots \\
              & \ddots & \ddots & \beta_{k-1} \\
               & & \gamma_{k-1} & \alpha_k 
    \end{array}\!\!\right] \in \C^{k\times k}.\]
The \emph{tridiagonal} structure is inherited from the three-term
recurrence relations at the heart of the bi-Lanczos process.
Indeed, the $j$th columns of equations~(\ref{eq:bilancV})--(\ref{eq:bilancW}) give
\begin{subequations} \label{eq:biLanc}
\begin{eqnarray}
  \gamma_j \Bv_{j+1}\!\!\! &=&\!\!\! \BA\Bv_j - \alpha_j \Bv_j - \beta_{j-1} \Bv_{j-1}, \label{eq:biLancv}\\
  \overline{\beta_j} \Bw_{j+1}\!\!\! &=&\!\!\! \BAs^*\Bw_j - \overline{\alpha_j} \Bw_j - \gamma_{j-1} \Bw_{j-1}
\end{eqnarray}
\end{subequations}
for $j=1,\ldots, k$, with $\beta_0 = \gamma_0 =0$ and $\Bv_0 = \Bw_0 = \Bzero$.
(We assume $\gamma_j\in \R$, a natural choice in bi-Lanczos codes.)

%%%%%%%%%%%%%%%%%%%%%%%%%%%%%%%%%%%%%%%%%%%%%%%%%%%%%%%%%%%%%%%%%%%%%%%%%%%%%%%%
\subsection{Greenbaum's theorem}

Few concrete results are known about the spectra of ROMs generated
using the bi-Lanczos process.
The most substantial insight comes from Greenbaum~\cite[Theorem~3]{Gre98}
(motivated by the study of bi-Lanczos-based iterative methods for solving $\BA\Bx=\Bb$).
We will interpret this result in the context of moment-matching model reduction.

\medskip
Suppose we are given a system of order~$n$, 
\[ \dot\Bx(t) = \BA\Bx(t) + \Bb u(t),\]
for which we seek a ROM of order $k\le n/2$.
(The output vector $\Bc$ will be constructed later.)

Choose \emph{any} parameters 
\begin{equation} 
 \alpha_1, \ldots, \alpha_k, \label{eq:alpha} 
\end{equation}
\begin{equation} 
 \beta_1, \ldots, \beta_{k-1}, \label{eq:beta}
\end{equation}
and construct $\gamma_1,\ldots, \gamma_{k-1}$ by running the recurrence
\begin{subequations} \label{eq:3term}
\begin{eqnarray}
\widehat{\Bv}_{j+1} \!\!\!&:=&\!\!\! \BA\Bv_j - \alpha_j \Bv_j - \beta_{j-1} \Bv_{j-1} \label{eq:3termA} \\[.1em]
          \gamma_j  \!\!\!&:=& \!\!\!\|\widehat{\Bv}_{j+1}\| \label{eq:gamma}\\[.1em]
          \Bv_{j+1} \!\!\! &:=& \!\!\!\widehat{\Bv}_{j+1} / \gamma_j, 
\end{eqnarray}
\end{subequations}
for $j=1,\ldots, k$, with $\Bv_1 = \Bb/\|\Bb\|$, $\Bv_0 = \Bzero$, and $\beta_0=0$;
this recurrence is obviously meant to mimic~(\ref{eq:biLancv}).

%%%%%%%%%%%%%%%%%%%%%%%%%%%%%%%%%%%%%%%%%%%%%%%%%%%%%%%%%%%%%%%%%%%%%%%%%%%%%%%%
\begin{theorem} \label{thm:greenbaum}
Consider the parameters $\{\alpha_j\}_{j=1}^k$, $\{\beta_j\}_{j=1}^{k-1}$, and
$\{\gamma_j\}_{j=1}^{k}$ from~$(\ref{eq:alpha})$, $(\ref{eq:beta})$, and {\rm (\ref{eq:gamma})},
and the vectors $\Bv_1,\ldots, \Bv_{k+1}$ from~$(\ref{eq:3term})$,
with $k\le n/2$.
Suppose $\Bc\in\Cn$ satisfies
\begin{equation} \label{eq:obliquec}
  \Bc \perp {\rm span}\{\Bv_2, \ldots, \Bv_{k+1}, \BA\Bv_{k+1}, \ldots, \BAs^{k-1}\Bv_{k+1}\}.
\end{equation}
Then either the bi-Lanczos process breaks down, or it runs to completion and creates the
ROM
\[  \WkAVk
    =
   \left[\!\!\begin{array}{cccc}
     \alpha_1 & \beta_1 \\
     \gamma_1 & \alpha_2 & \ddots \\
              & \ddots & \ddots & \beta_{k-1} \\
               & & \gamma_{k-1} & \alpha_k
    \end{array}\!\!\right] \in \C^{k\times k}.\]
\end{theorem}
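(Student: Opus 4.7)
The plan is to run the left-side bi-Lanczos recurrence $(\ref{eq:bilancW})$ with the prescribed coefficients $\alpha_j,\beta_j$ and the $\gamma_j$ produced by $(\ref{eq:3term})$, and verify that the resulting left vectors $\Bw_1,\dots,\Bw_k$ are biorthogonal to the prescribed right vectors $\Bv_1,\dots,\Bv_{k+1}$. Once $\Bw_i^*\Bv_j = \delta_{ij}$ holds for $1 \le i \le k$ and $1 \le j \le k+1$, the tridiagonal form of $\WkAVk = \BT_k$ follows immediately by computing $(\WkAVk)_{ij} = \Bw_i^*\BA\Bv_j$ and substituting the right recurrence from $(\ref{eq:bilancV})$ to expand $\BA\Bv_j$; all off-tridiagonal entries collapse to zero.

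First I would translate $(\ref{eq:obliquec})$ into a moment condition on $\Bc$. Since each $\Bv_j$ is a polynomial of degree $j-1$ in $\BA$ applied to $\Bb$, the set $\{\Bv_1,\dots,\Bv_{k+1}\}$ spans $\CK_{k+1}(\BA,\Bb)$; adjoining $\BA\Bv_{k+1},\dots,\BAs^{k-1}\Bv_{k+1}$ extends this span to $\CK_{2k}(\BA,\Bb)$. Hence $(\ref{eq:obliquec})$ is equivalent to the moment conditions
\[
 \Bc^*\BAs^m\Bb = 0, \qquad m = 1,2,\ldots,2k-1,
\]
which amount to $2k-1$ linear constraints on $\Bc \in \Cn$, solvable nontrivially because $k \le n/2$; generically one can also arrange $\Bc^*\Bb \ne 0$ so that we may set $\Bw_1 := \Bc/\overline{\Bc^*\Bv_1}$ without immediate breakdown. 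We then define $\Bw_{j+1}$ inductively by the left recurrence in $(\ref{eq:bilancW})$, declaring breakdown if $\beta_j = 0$ while the right-hand side is nonzero.

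Biorthogonality $\Bw_j^*\Bv_i = \delta_{ji}$ for $1 \le j \le k$ and $1 \le i \le k+1$ then follows by the standard bi-Lanczos induction on $j$: multiply the left recurrence by $\Bv_i$, substitute $\BA\Bv_i = \gamma_i\Bv_{i+1} + \alpha_i\Bv_i + \beta_{i-1}\Bv_{i-1}$ into $\Bw_j^*\BA\Bv_i$, and carry out the case analysis on $i \in \{j-1,j,j+1\}$ versus $|i-j| \ge 2$. Equivalently, writing $\Bw_j = q_{j-1}(\BAs^*)\Bc$ for a polynomial $q_{j-1}$ of degree $j-1$, the inner product $\Bw_j^*\Bv_i$ is a linear combination of $\Bc^*\Bb,\Bc^*\BA\Bb,\dots,\Bc^*\BAs^{i+j-2}\Bb$, and for $i+j-2 \le 2k-1$ the moment conditions collapse this sum to a scalar multiple of $\Bc^*\Bb$. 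The main obstacle—and the reason $(\ref{eq:obliquec})$ has the particular form stated—is bookkeeping the range of validity: one needs biorthogonality all the way to $i = k+1$ (not just $i = k$) so that the spillover term $\gamma_k\Bv_{k+1}$ in the expansion of $\BA\Bv_k$ is annihilated by each $\Bw_i$ with $i \le k$, yielding the tridiagonal $\BT_k$ rather than a Hessenberg matrix. The $2k-1$ moment conditions furnished by $(\ref{eq:obliquec})$ match this required range exactly, so whenever the recurrence runs without breakdown it produces $\WkAVk = \BT_k$.
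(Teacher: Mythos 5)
The paper itself does not prove this theorem; it cites Greenbaum \cite[Theorem~3]{Gre98}. Your overall architecture (generate the $\Bw_j$ by running the left recurrence with the prescribed coefficients, prove $\Bw_i^*\Bv_j=\delta_{ij}$ by induction, then read off the tridiagonal form) is indeed the right one and matches Greenbaum's argument. But there is a genuine error at the key step: condition~(\ref{eq:obliquec}) is \emph{not} equivalent to the moment conditions $\Bc^*\BAs^m\Bb=0$ for $m=1,\ldots,2k-1$. The two subspaces ${\rm span}\{\Bv_2,\ldots,\Bv_{k+1},\BA\Bv_{k+1},\ldots,\BAs^{k-1}\Bv_{k+1}\}$ and ${\rm span}\{\BA\Bb,\ldots,\BAs^{2k-1}\Bb\}$ are both $(2k-1)$-dimensional complements of ${\rm span}\{\Bb\}$ inside $\CK_{2k}(\BA,\Bb)$, but they are different subspaces whenever the $\alpha_j,\beta_j$ are nontrivial: e.g.\ $\Bv_2\propto\BA\Bb-\alpha_1\Bb$ carries a $\Bb$-component. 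Orthogonality to one does not imply orthogonality to the other, and the substitution is not harmless: under your moment conditions the base case of the induction already fails, since $\Bw_1^*\Bv_2\propto\Bc^*(\BA\Bb-\alpha_1\Bb)=-\alpha_1\,\Bc^*\Bb\neq 0$ in general. (The $k=1$ case makes this stark: (\ref{eq:obliquec}) forces $\Bw_1^*\BA\Bv_1=\Bc^*\BA\Bb/\Bc^*\Bb=\alpha_1$ as claimed, whereas $\Bc^*\BA\Bb=0$ would force the ROM to be $0$, not $\alpha_1$.) Your fallback observation that the inner products ``collapse to a scalar multiple of $\Bc^*\Bb$'' exposes the same gap: you still must show that scalar is $\delta_{ij}$, and under the raw moment conditions it is not.

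The repair is to use (\ref{eq:obliquec}) as stated, without translating it. Since each $\Bw_i$ is a polynomial of degree $i-1$ in $\BAs^*$ applied to $\Bc$, condition~(\ref{eq:obliquec}) gives $\Bw_i^*\Bv_j=0$ directly for $i=1$, $j=2,\ldots,k+1$, and gives $\Bw_i^*\BAs^m\Bv_{k+1}=0$ whenever $(i-1)+m\le k-1$ (which is exactly the range needed to control the boundary column $j=k+1$ through step $i=k$; this is why the powers in (\ref{eq:obliquec}) stop at $\BAs^{k-1}$). The induction then proceeds by expanding $\BA\Bv_j$ via (\ref{eq:3termA}) for $j\le k$ and using the fact that the \emph{same} prescribed $\alpha_j,\beta_j,\gamma_j$ appear in both recurrences, so the terms telescope to $\delta_{ij}$. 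Also note a smaller slip: you cannot assert that the standard bi-Lanczos orthogonalization argument applies here, because the coefficients are arbitrary rather than chosen to enforce local biorthogonality; the cancellation comes entirely from (\ref{eq:obliquec}), which is the whole content of the theorem.
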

%%%%%%%%%%%%%%%%%%%%%%%%%%%%%%%%%%%%%%%%%%%%%%%%%%%%%%%%%%%%%%%%%%%%%%%%%%%%%%%%
The cases of breakdown in Theorem~\ref{thm:greenbaum} correspond, for example,
to scenarios where $\beta_j=0$ or $\gamma_j=0$, since these values are used
to normalize $\Bw_{j+1}$ and $\Bv_{j+1}$ in the Lanczos algorithm.
(We shall not concern ourselves with look-ahead procedures (see, e.g.,~\cite{PTL85}) here,
which can provide a work-around to many instances of breakdown.)

\smallskip
As Greenbaum evocatively describes it, Theorem~\ref{thm:greenbaum} essentially says
that the bi-Lanczos process can be viewed as executing an arbitrary recurrence 
for the first $n/2$ steps, provided one is free to select an appropriate left-starting
vector $\Bc$. 
Since these recurrence coefficients are essential elements of the resulting ROM,
Theorem~\ref{thm:greenbaum} suggests a way to design cases where benign choices of $\BA$
(even stable, normal or Hermitian matrices)
lead to unstable ROMs.
The next example gives an extreme illustration.

%%%%%%%%%%%%%%%%%%%%%%%%%%%%%%%%%%%%%%%%%%%%%%%%%%%%%%%%%%%%%%%%%%%%%%%%%%%%%%%%
\begin{figure}[b!]
\begin{center}
\includegraphics[scale=0.55]{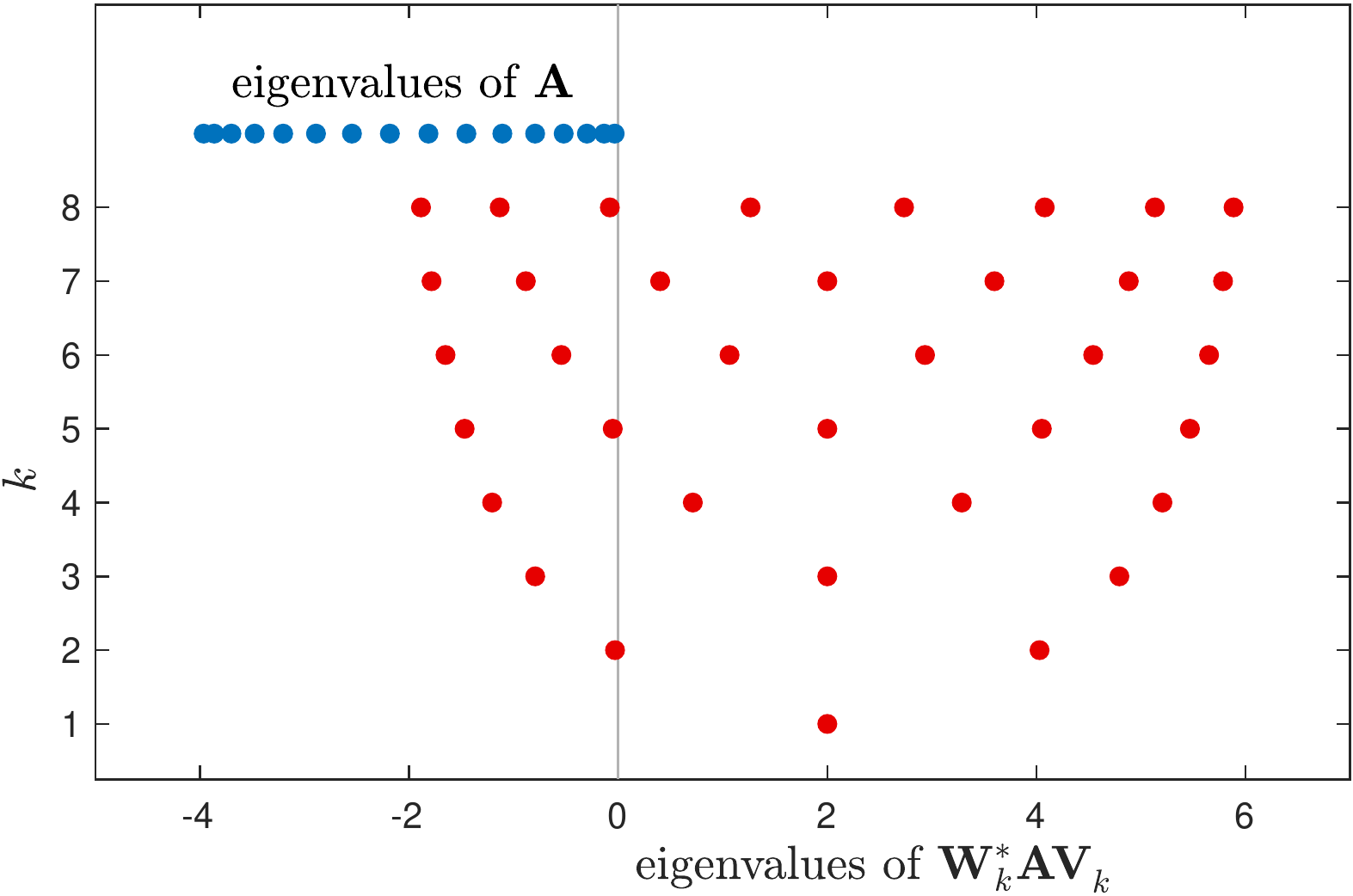}
\end{center}

\vspace*{-14pt}
\caption{\label{fig:greenbaum}
Comparison of the eigenvalues of $\BA$ (blue dots) in the left half-plane 
to those of the ROMs of order $k=1,\ldots, 8$ from 
Example~\ref{ex:greenbaum}, generated using the bi-Lanczos algorithm that
matches $2k$ moments.
Despite the fact that $\BA$ is stable and Hermitian, 
all of these ROMs are highly unstable.
}
\vspace*{-14pt}
\end{figure}
%%%%%%%%%%%%%%%%%%%%%%%%%%%%%%%%%%%%%%%%%%%%%%%%%%%%%%%%%%%%%%%%%%%%%%%%%%%%%%%%

%%%%%%%%%%%%%%%%%%%%%%%%%%%%%%%%%%%%%%%%%%%%%%%%%%%%%%%%%%%%%%%%%%%%%%%%%%%%%%%%
\begin{example} \label{ex:greenbaum}
Consider a stable continuous-time system with Hermitian matrix
\[ \BA = \left[\begin{array}{cccc}
      -2 & 1 \\ 1 & -2 & \ddots \\ & \ddots & \ddots & 1 \\ & & 1 & -2
      \end{array}\right] \in \C^{16\times 16}, 
\]
and input vector $\Bb = [1, 0, \ldots, 0]^T \in \C^{16}$.
The eigenvalues of $\BA$ are all negative real numbers:
\[ \sigma(\BA) = \bigg\{ -2+ 2 \cos\Big({k\pi \over 17}\Big): k=1,\ldots, 16\bigg\}.\]
Since $\BA$ is Hermitian, $W(\BA)$ is the convex hull of the spectrum, 
and \emph{any orthogonal projection method must produce a stable ROM.}\ \ 
Greenbaum's theorem shows that oblique projection methods can produce 
much more exotic results.

Suppose we seek a ROM of order $k=8$, and specify the bi-Lanczos recurrence
parameters
\begin{eqnarray*}
    \alpha_1 = \cdots = \alpha_8 \!\!\! &=&\!\!\! 2,  \\[.25em]
    \beta_1= \cdots = \beta_7 \!\!\! &=&\!\!\! 1. 
\end{eqnarray*}
(These parameters were selected to give a reduction that was likely to be unstable.)
From the three-term recurrence~(\ref{eq:3term}) we compute (to five digits)
\begin{center}\begin{tabular}{ll}
    $\gamma_1 = 4.12311$, & $\gamma_2 = 3.68474$, \\[.15em]
    $\gamma_3 = 4.12603$, & $\gamma_4 = 4.31536$, \\[.15em]
    $\gamma_5 = 4.43571$, & $\gamma_6 = 4.52257$, \\[.15em]
    $\gamma_7 = 4.58628$. 
\end{tabular}\end{center}
The vector $\Bc$ is then constructed, consistent with Theorem~\ref{thm:greenbaum},
by orthogonally projecting $\BA\Bb$ onto the orthogonal complement of the span
in~(\ref{eq:obliquec}).

Figure~\ref{fig:greenbaum} shows the eigenvalues of the matrix $\WkAVk$ from
the resulting ROM for $k=1,\ldots, 8$.
Despite the fact that $\BA$ is Hermitian and stable, each of the models of
order $k=1,\ldots, 8$ is \emph{unstable}; e.g., the ROMs of order $k=7$ and $k=8$ both 
have five unstable modes.
\end{example}
%%%%%%%%%%%%%%%%%%%%%%%%%%%%%%%%%%%%%%%%%%%%%%%%%%%%%%%%%%%%%%%%%%%%%%%%%%%%%%%%

In closing this section, we emphasize a fundamental distinction between the
adversarial construction 
in Theorem~\ref{thm:dtm} for orthogonal projection methods,  
and its counterpart 
in Theorem~\ref{thm:greenbaum} for oblique
projection. 
In the orthogonal case, \emph{the user can only specify the eigenvalues of $\BA$};
the construction obtains pathological results by building $\BA$ with large departure 
from normality, along with a corresponding $\Bb$.
In the oblique case, \emph{the user supplies the entire matrix $\BA$
and input vector $\Bb$}.
The construction in no way influences the departure of $\BA$ from normality;
it achieves its ends only by designing the output vector $\Bc$.
Thus the oblique construction is more troubling, as it can produce 
startling results for apparently benign $\BA$.

%%%%%%%%%%%%%%%%%%%%%%%%%%%%%%%%%%%%%%%%%%%%%%%%%%%%%%%%%%%%%%%%%%%%%%%%%%%%%%%%
\section{The unsung merits of unstable ROMs} \label{sec:unstable}
%%%%%%%%%%%%%%%%%%%%%%%%%%%%%%%%%%%%%%%%%%%%%%%%%%%%%%%%%%%%%%%%%%%%%%%%%%%%%%%%

Conventionally, an unstable ROM seems to be a poor approximation 
to a stable dynamical system, regardless of its other virtues 
(e.g., transfer functions that match moments).
Missing the asymptotic character of the model is a fundamental shortcoming.
Before dismissing unstable ROMs entirely, one should note that they can
still give insight into the original system, especially regarding 
transient dynamics.  We warn that efforts to suppress the instability can 
have the unintended consequence of compromising transient accuracy to 
obtain long-term qualitative agreement.
We illustrate these points with two examples.

%%%%%%%%%%%%%%%%%%%%%%%%%%%%%%%%%%%%%%%%%%%%%%%%%%%%%%%%%%%%%%%%%%%%%%%%%%%%%%%%
\begin{figure}[b!]
\begin{center}
\includegraphics[scale=.5]{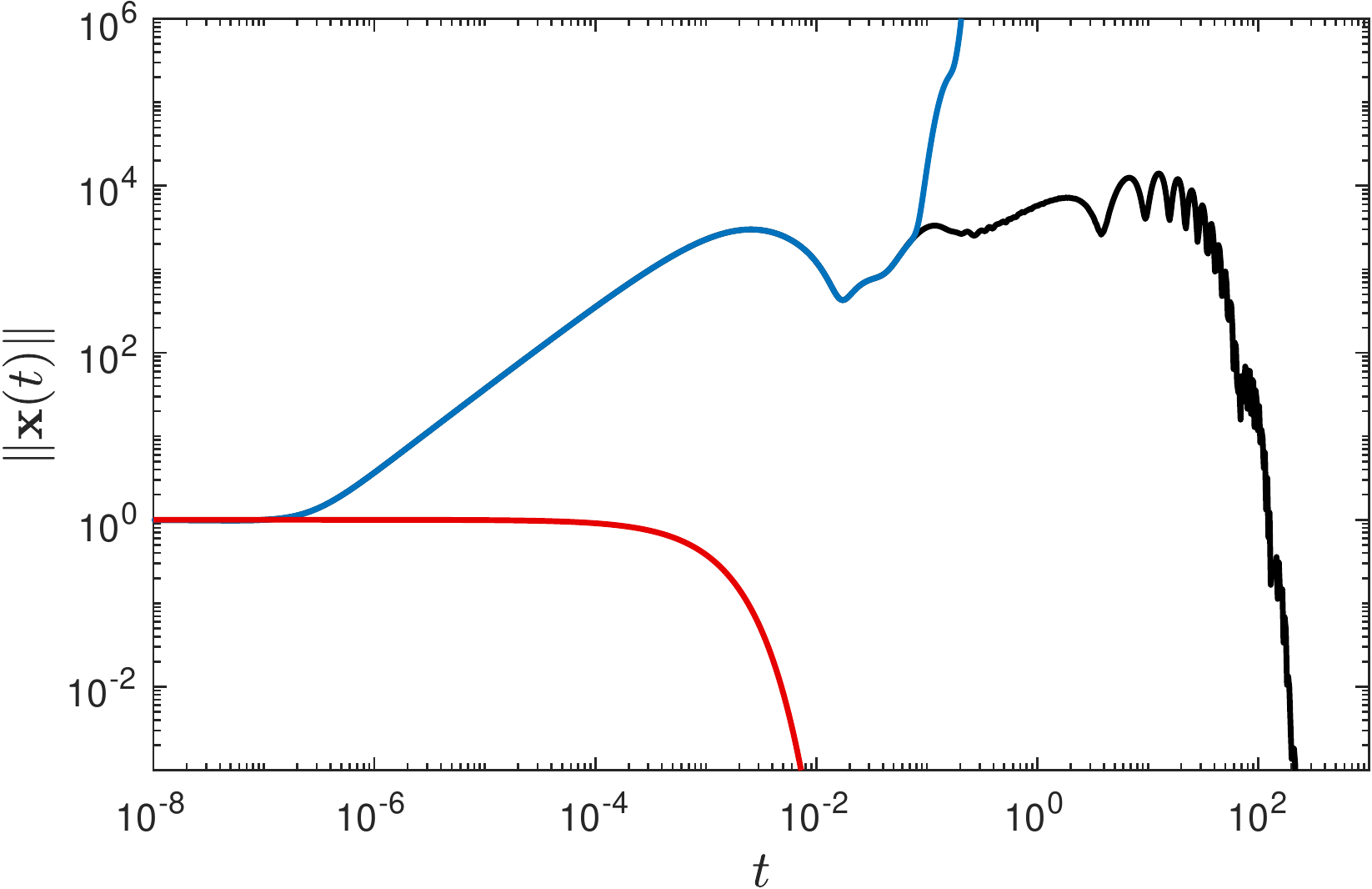}

\begin{picture}(0,0)
\put(-15,147){\footnotesize \emph{unstable ROM}}
\put(-45,40){\footnotesize \emph{stabilized ROM}}
\put(45,40){\footnotesize \emph{original model}}
\end{picture}

\includegraphics[scale=.5]{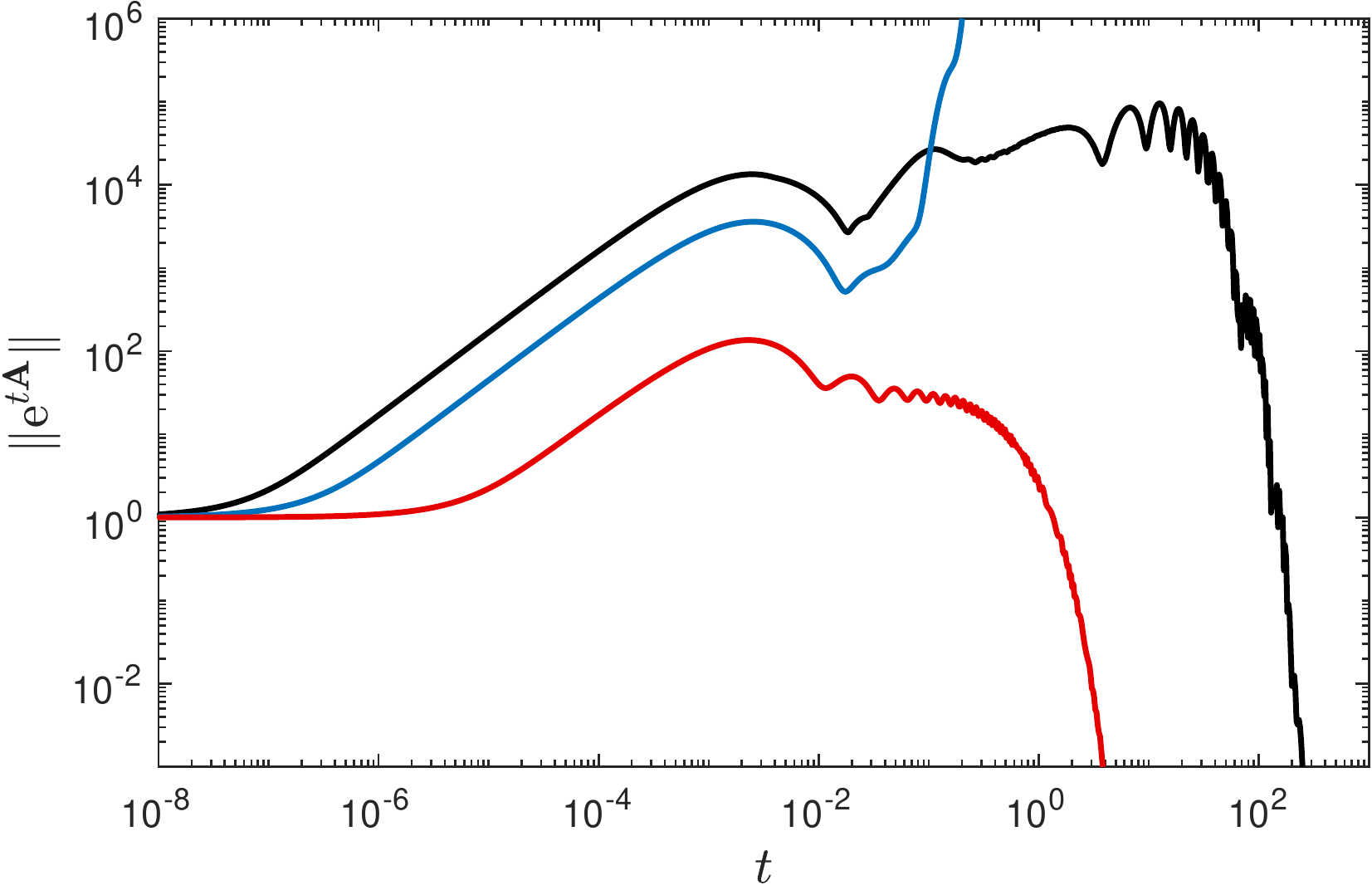}

\begin{picture}(0,0)
\put(-13,150){\footnotesize \emph{unstable ROM}}
\put(90,90){\rotatebox{-83}{\footnotesize \emph{original model}}}
\put(47,90){\rotatebox{-75}{\footnotesize \emph{stabilized ROM}}}
\end{picture}
\end{center}

\vspace*{-28pt}
\caption{\label{fig:unstable1}
Evolution of a solution $\Bx(t)$ to $\dot \Bx(t) = \BA\Bx(t)$ for the 
Boeing B-767 matrix in Example~\ref{ex:boeing} (top), 
and the analogous plot for the solution operator $\eop^{t\BA}$ (bottom).
The unstable ROM provides a much more accurate impression of 
the dynamics at early $t$ than does the stabilized model 
(both of order $k=20$).
}
\end{figure}
%%%%%%%%%%%%%%%%%%%%%%%%%%%%%%%%%%%%%%%%%%%%%%%%%%%%%%%%%%%%%%%%%%%%%%%%%%%%%%%%

%%%%%%%%%%%%%%%%%%%%%%%%%%%%%%%%%%%%%%%%%%%%%%%%%%%%%%%%%%%%%%%%%%%%%%%%%%%%%%%%
\begin{figure}[t!]
\begin{center}
\includegraphics[scale=.5]{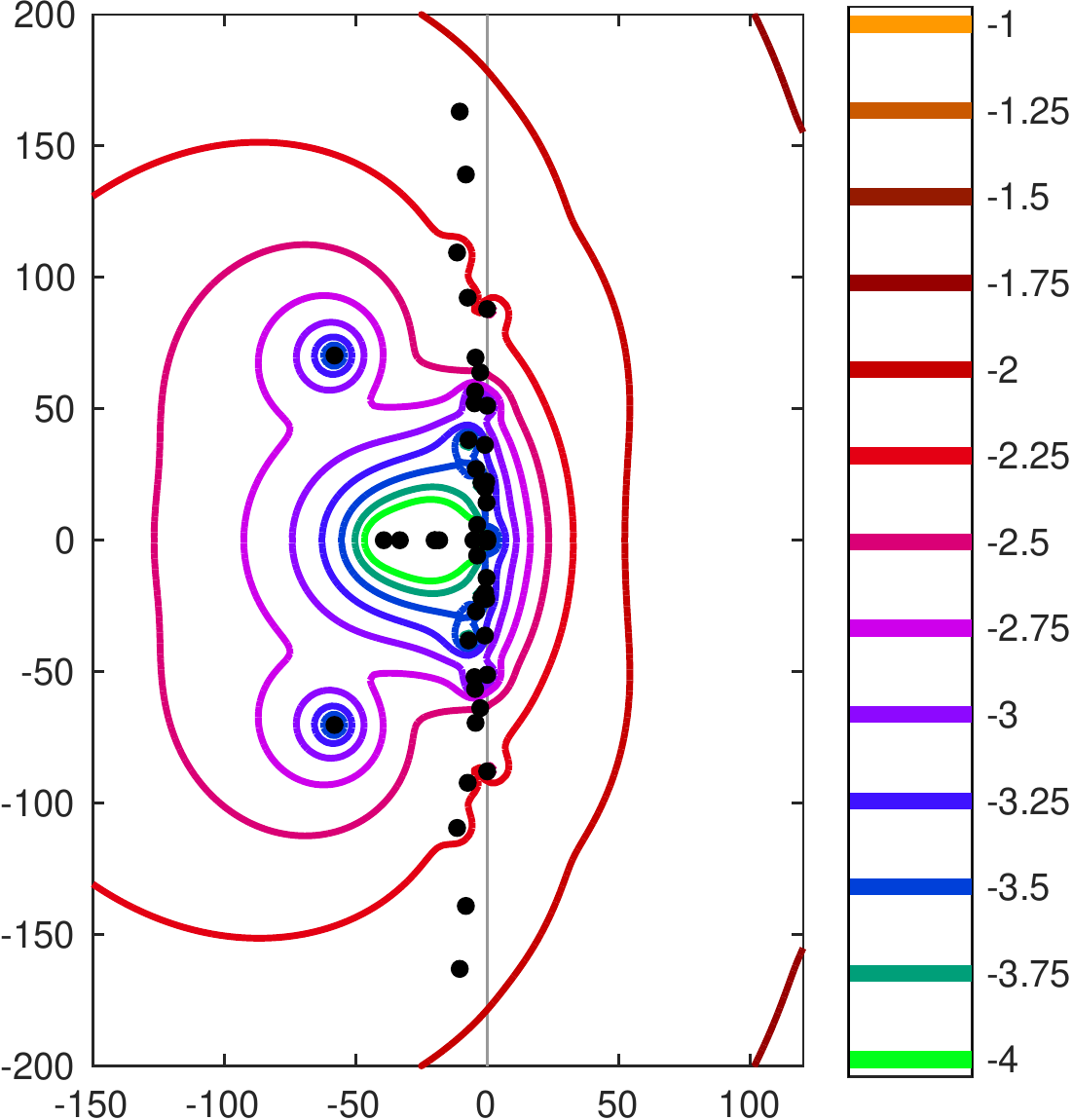}

\begin{picture}(0,0)
\put(-30,0){\footnotesize \emph{original model}}
\end{picture}

\vspace*{12pt}
\includegraphics[scale=.5]{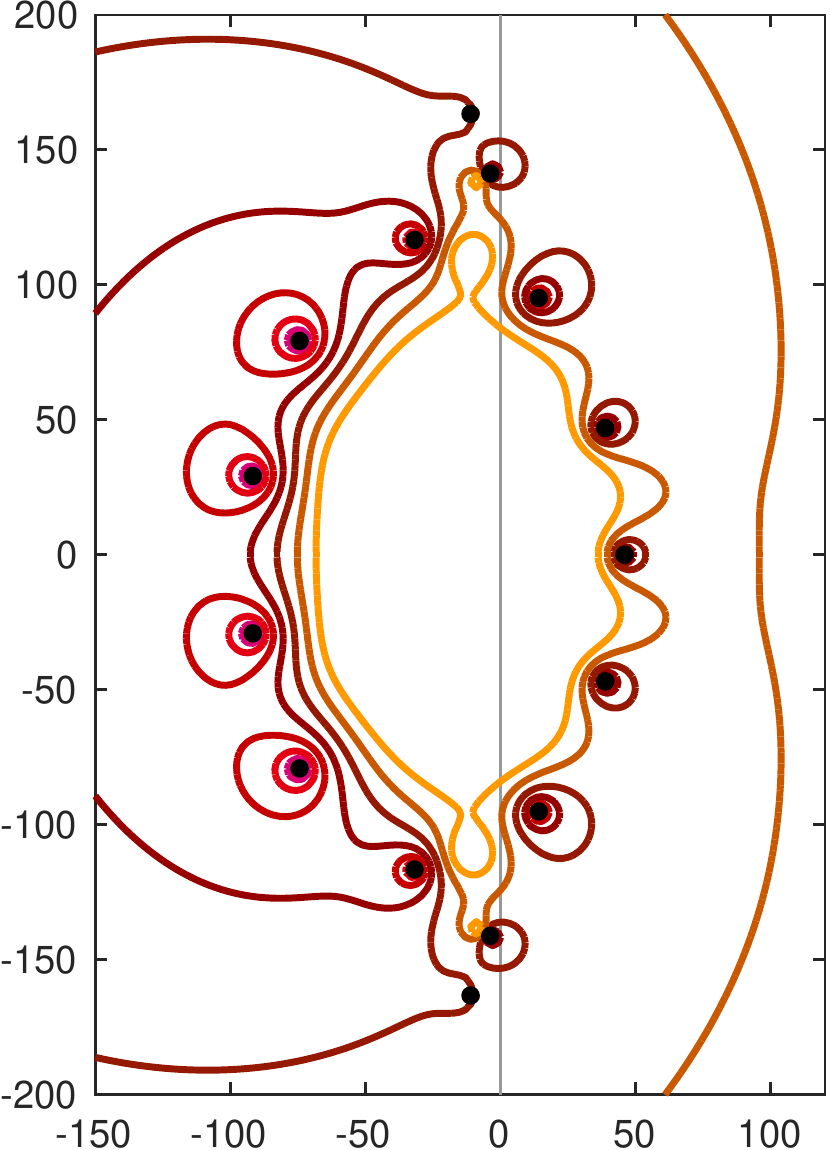}\quad
\includegraphics[scale=.5]{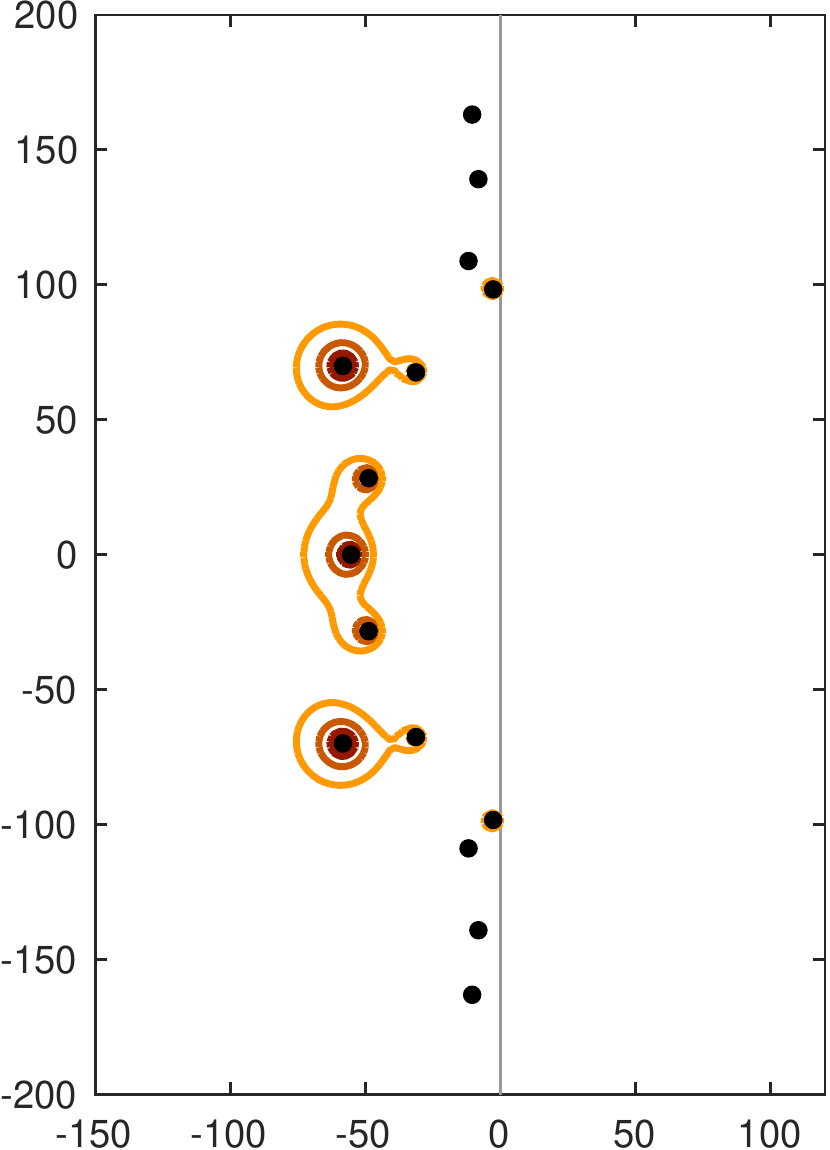}

\begin{picture}(0,0)
\put(-85,0){\footnotesize \emph{unstable ROM}}
\put(40,0){\footnotesize \emph{stabilized ROM}}
\end{picture}
\end{center}

\vspace*{-10pt}
\caption{\label{fig:unstable1psa}
For the Boeing B-767 model, $\sigma_\eps(\BA)$ for the 
full order model (top), compared to the 
unstable ROM (bottom left) and its stabilized variant (bottom right),
both with $k=20$.
(All plots use $\eps=10^{-1}, 10^{-1.25}, \ldots, 10^{-4}$, and 
were computed with Eigtool~\cite{Wri02a}.)
The stabilization procedure repels 
eigenvalues near the origin and reduces the departure from normality.
(These models have a few eigenvalues beyond these axes,
which do not have a major influence on the transient dynamics.)
For a general investigation of the use of orthogonal Krylov projection
to approximate pseudospectra, see~\cite{TT96}.
}
\end{figure}
%%%%%%%%%%%%%%%%%%%%%%%%%%%%%%%%%%%%%%%%%%%%%%%%%%%%%%%%%%%%%%%%%%%%%%%%%%%%%%%%

%%%%%%%%%%%%%%%%%%%%%%%%%%%%%%%%%%%%%%%%%%%%%%%%%%%%%%%%%%%%%%%%%%%%%%%%%%%%%%%%
\begin{example} \label{ex:boeing}

This example begins with a benchmark problem for the control of a
flutter condition in a Boeing B-767 aircraft, contributed by 
Anderson, Ly, and Liu to the collection~\cite{Dav90}.
This $55\times 55$ matrix, call it $\BA_0$, is unstable, having a 
complex-conjugate pair of eigenvalues in the right half-plane.
Burke, Lewis, and Overton~\cite{BLO03a} used an eigenvalue optimization 
algorithm to design a low-rank perturbation that stabilizes $\BA_0$.
It is this stable $\BA$ that we investigate here; we will
refer to it as the ``original model'' when comparing it to the ROMs 
we derive from it.

This $\BA$ has spectral abscissa $\alpha(\BA) \approx  -0.07877$,
but the numerical range $W(\BA)$ extends far into the right half-plane,
with numerical abscissa 
\[ \omega(\BA) \approx 8.4560 \times 10^6.\]
As evident from~(\ref{eq:t0}), solutions to $\dot\Bx(t) = \BA\Bx(t)$ must initially 
exhibit strong growth, although the system is asymptotically stable;
Figure~\ref{fig:unstable1} provides confirmation.
The top plot in Figure~\ref{fig:unstable1psa} shows $\eps$-pseudospectra of $\BA$,
which were previously investigated in~\cite[Chapter~15]{TE05plain}.

It suffices to simply consider $\dot\Bx(t)=\BA\Bx(t)$ here,
rather than the full input-output system~(\ref{eq:xdot})--(\ref{eq:y}).
We let the initial condition
\[ \Bx(0) := \Bx_0 = [1, 1, \ldots, 1]^T/\sqrt{55} \in \C^{55}\]
play the role that the input vector $\Bb$ normally does when
constructing Krylov-based ROMs.

We select the dimension $k=20$ for our ROMs.
(The qualitative results shown here are not particularly sensitive 
to the choice of $k$ and $\Bx_0$.)
First we construct an orthogonal projection model using
the Krylov subspace $\CV=\CK_k(\BA,\Bx_0)$.
The resulting $\VAV$ is unstable: it has 5~eigenvalues in
the right half-plane, as can be seen in the bottom-left plot 
in Figure~\ref{fig:unstable1psa}.
Of course, as Figure~\ref{fig:unstable1} shows, 
the unstable model diverges from the stable model as $t\to\infty$,
\emph{but at earlier times, it does an excellent job of signaling 
the system's transient growth on the scale of $10^3$}.
Such growth could be significant for a motivating application, e.g., warning
that a linearized model might be a poor approximation of an underlying 
nonlinear system.

Still, the instability in $\VAV$ is unappealing, and one might naturally
prefer a stable ROM.\ \  To obtain stability, 
we follow a general approach of Grimme, Sorensen, and van Dooren
(in the context of the bi-Lanczos algorithm~\cite{GSV95}).
To build an orthonormal basis for $\CK_k(\BA,\Bx_0)$
we use the Arnoldi algorithm~\cite{Saa80}.
After constructing the order-20 unstable model,
we \emph{restart} the Arnoldi algorithm~\cite{Sor92},
replacing the starting vector $\Bx_0$ with a
``filtered'' starting vector $\phi_1(\BA)\Bx_0$, where 
$\phi_1$ is the degree-5 monic polynomial with roots 
at the unstable eigenvalues of $\VAV$.
We then use $\CV=\CK_k(\BA,\phi_1(\BA)\Bx_0)$
to construct a new ROM, which now only has only 3~unstable modes.
Repeat the process: let the cubic polynomial $\phi_2$ have roots 
at the 3~new unstable modes, and add $\phi_2(\BA)$ to the filter.
The next space $\CV=\CK_k(\BA,\phi_2(\BA)\phi_1(\BA)\Bx_0)$ has just 1~unstable mode,
which we make the root of the linear polynomial $\phi_3$.
Finally, $\CV=\CK_k(\BA,\Phi(\BA)\Bx_0)$ delivers a stable order-20 ROM, 
where $\Phi(z) := \phi_3(z)\phi_2(z)\phi_1(z)$ has roots at all the previously
encountered unstable modes.

The relevant eigenvalues of this new stabilized ROM can be seen in 
the bottom-right plot of Figure~\ref{fig:unstable1psa}.
Notice that the polynomial filter, with its roots at the unstable eigenvalues
(including the 5~unstable modes in the bottom-left plot of Figure~\ref{fig:unstable1psa}),
effectively deters the ROM from having modes near the origin.
Moreover, the stabilization process has suppressed the departure
from normality, as reflected in a diminished numerical abscissa:  
\begin{center}\begin{tabular}{rr}
original model: & $\omega(\BA)\approx 8.45603 \times 10^6$;\\[.1em]
unstable ROM:   & $\omega(\VAV)\approx 2.24872 \times 10^6$;\\[.1em]
stabilized ROM:   & $\omega(\VAV)\approx 8.90589 \times 10^4$.
\end{tabular}\end{center}
The stabilized system still exhibits transient growth for some initial conditions
(see the bottom plot of Figure~\ref{fig:unstable1}), but far less than the original model.
\end{example}
%%%%%%%%%%%%%%%%%%%%%%%%%%%%%%%%%%%%%%%%%%%%%%%%%%%%%%%%%%%%%%%%%%%%%%%%%%%%%%%%

%%%%%%%%%%%%%%%%%%%%%%%%%%%%%%%%%%%%%%%%%%%%%%%%%%%%%%%%%%%%%%%%%%%%%%%%%%%%%%%%
\begin{figure}[b!]
\begin{center}
\includegraphics[scale=.5]{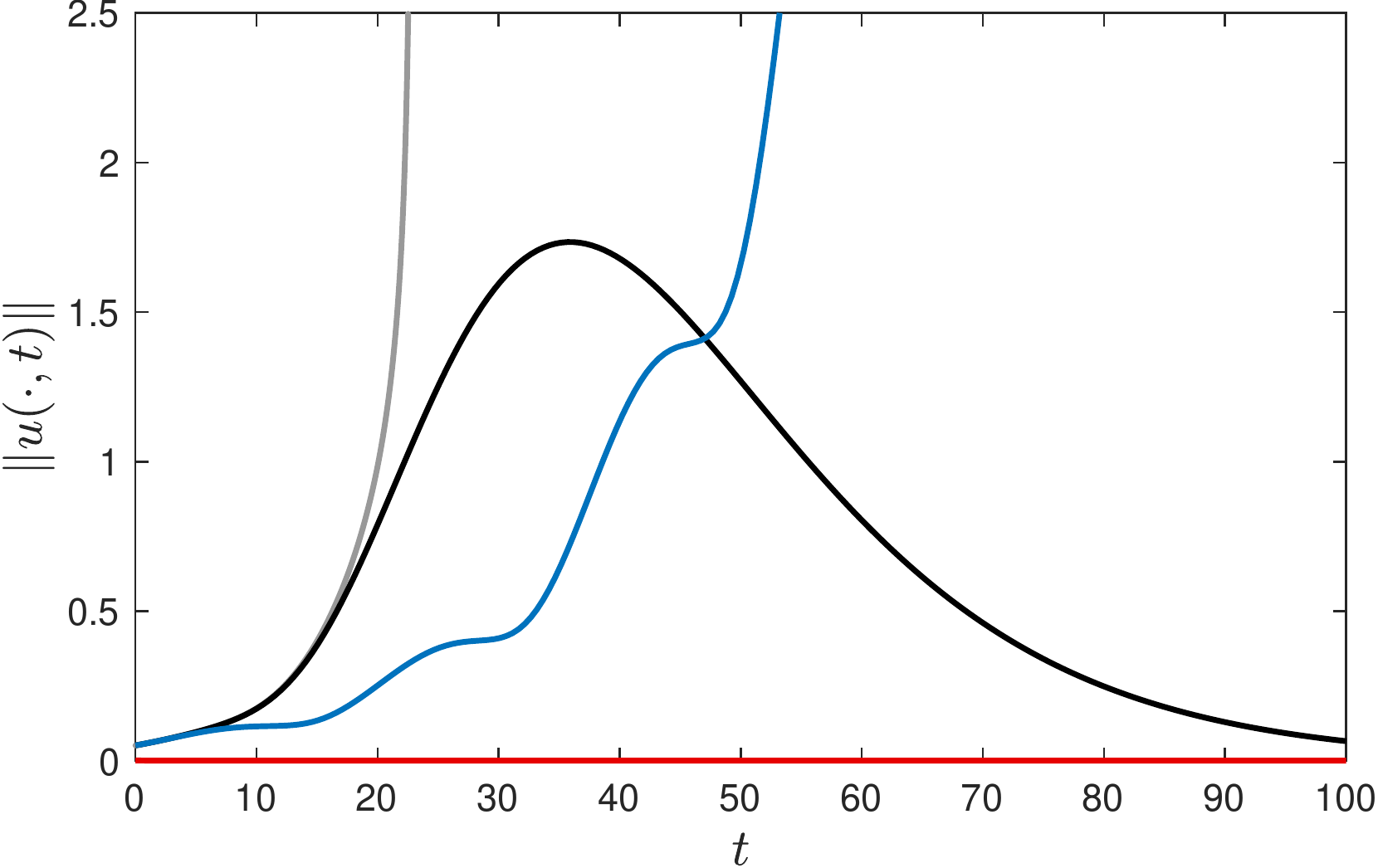}

\begin{picture}(0,0)
\put(-64,80){\rotatebox{86}{\footnotesize \emph{full nonlinear model}}}
\put(10,105){\rotatebox{79}{\footnotesize \emph{unstable ROM}}}
\put(45,63){\rotatebox{-23}{\footnotesize \emph{full linear model}}}
\put(-20,35){\footnotesize \emph{stabilized ROM}}
\end{picture}

\includegraphics[scale=.5]{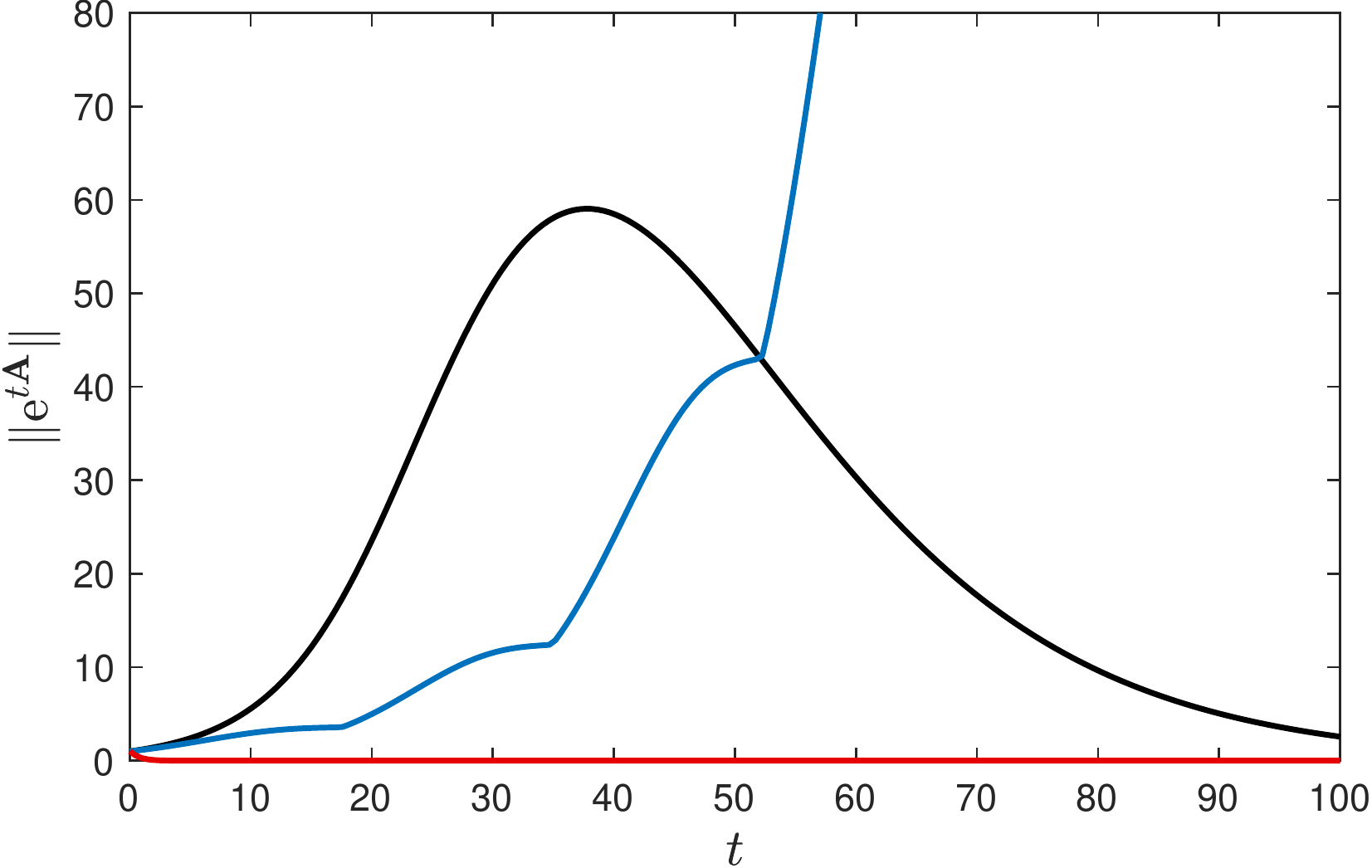}

\begin{picture}(0,0)
\put(17,103){\rotatebox{79}{\footnotesize \emph{unstable ROM}}}
\put(45,69){\rotatebox{-28}{\footnotesize \emph{full linear model}}}
\put(-20,35){\footnotesize \emph{stabilized ROM}}
\end{picture}
\end{center}

\vspace*{-28pt}
\caption{\label{fig:unstable2}
Evolution of a solution $u(x,t)$ to the nonlinear heat model and its 
linearization from Example~\ref{ex:nlheat} (top),
and the analogous plot for the solution operator $\eop^{t\BA}$
for the linearized operator and two linear ROMs derived from it (bottom).
Again, the unstable ROM provides a more accurate impression of 
the dynamics at early $t$ than does the stabilized model 
(both of order $k=40$).
}
\end{figure}
%%%%%%%%%%%%%%%%%%%%%%%%%%%%%%%%%%%%%%%%%%%%%%%%%%%%%%%%%%%%%%%%%%%%%%%%%%%%%%%%

%%%%%%%%%%%%%%%%%%%%%%%%%%%%%%%%%%%%%%%%%%%%%%%%%%%%%%%%%%%%%%%%%%%%%%%%%%%%%%%%
\begin{figure}[b!]
\begin{center}
\includegraphics[scale=.48]{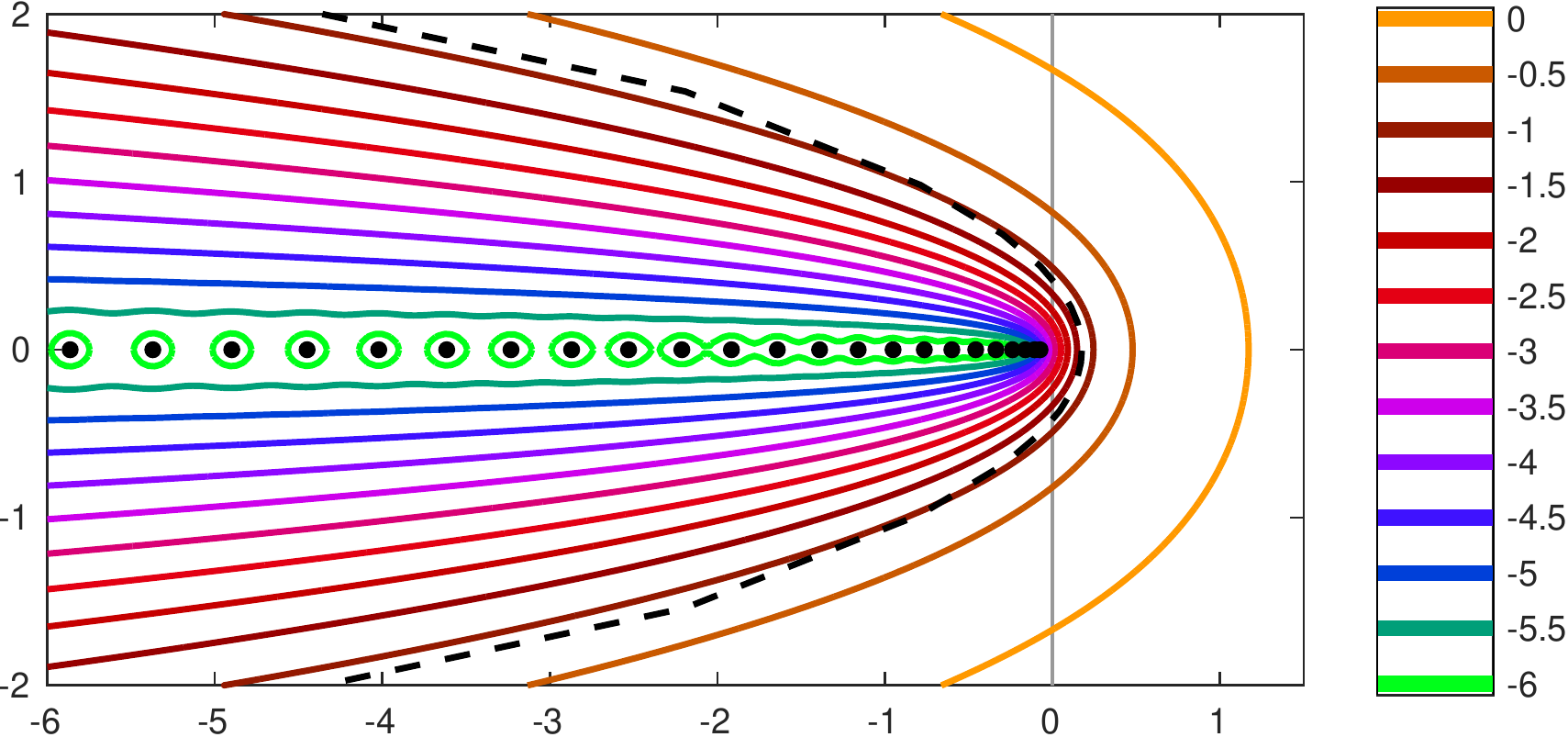}

{\footnotesize \emph{original model}}

\vspace*{9pt}
\includegraphics[scale=.48]{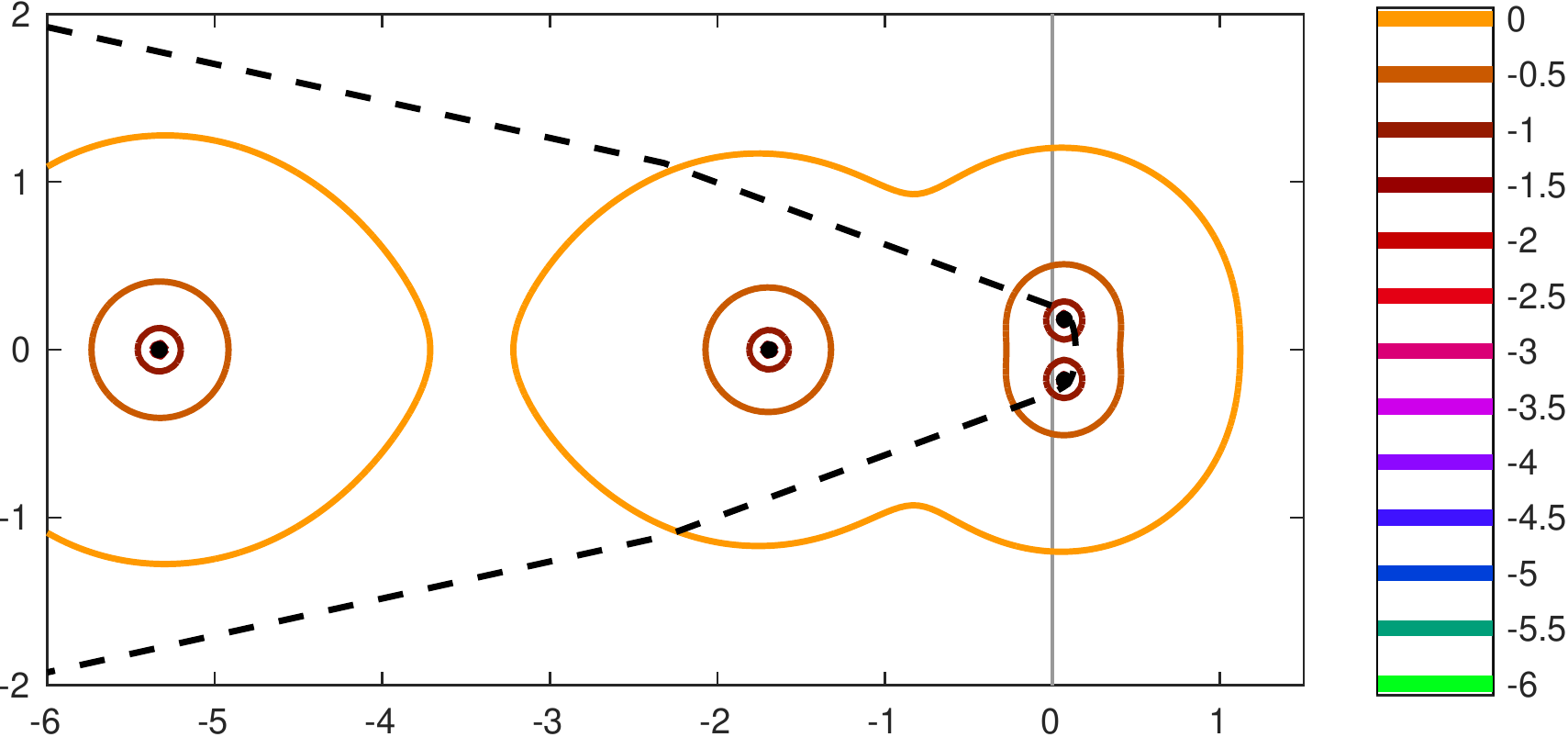}

{\footnotesize \emph{unstable ROM}}

\vspace*{9pt}
\includegraphics[scale=.48]{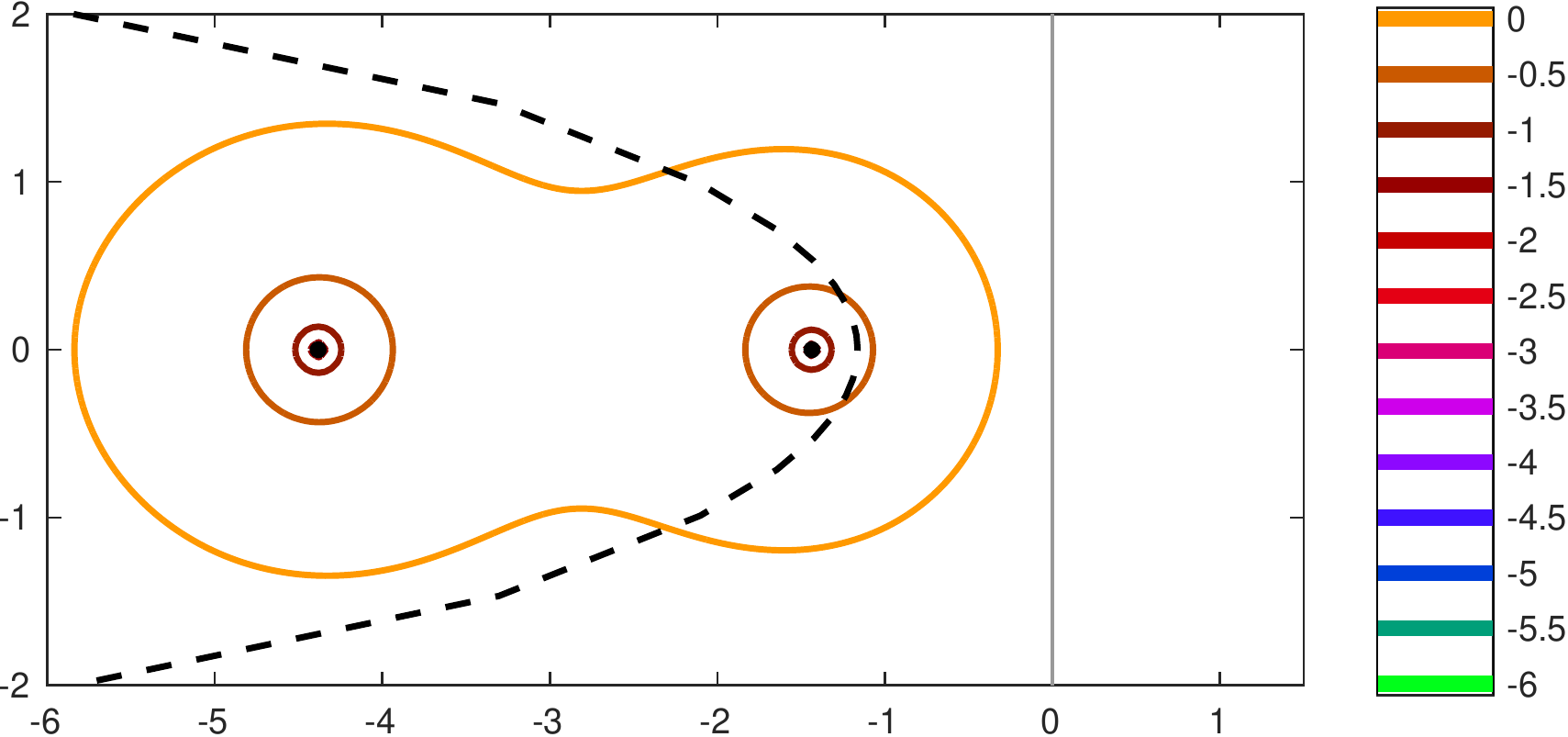}

{\footnotesize \emph{stabilized ROM}}

\end{center}

\vspace*{-14pt}
\caption{\label{fig:unstable2psa}
For the linearized heat model, the rightmost part of 
$\sigma_\eps(\BA)$ for the full order discretization (top)
and order $k=40$ ROMs (middle and bottom).  
(All plots use the same scale $\eps=10^{0}, 10^{-0.5}, \ldots, 10^{-6}$,
and were computed with EigTool~\cite{Wri02a}.)
The dashed black curves denote the boundaries of the numerical range;
the stabilitzation procedure drives both eigenvalues and $W(\VAV)$ too far to the left.
\vspace*{-10pt}
}
\end{figure}
%%%%%%%%%%%%%%%%%%%%%%%%%%%%%%%%%%%%%%%%%%%%%%%%%%%%%%%%%%%%%%%%%%%%%%%%%%%%%%%%

%%%%%%%%%%%%%%%%%%%%%%%%%%%%%%%%%%%%%%%%%%%%%%%%%%%%%%%%%%%%%%%%%%%%%%%%%%%%%%%%
\begin{example} \label{ex:nlheat}
Consider the nonlinear heat equation 
\begin{equation} \label{eq:nlheat}
    u_t(x,t) = u_{xx}(x,t) + u_x(x,t) + \textstyle{1\over 8} u(x,t) + u^3(x,t),
\end{equation}
posed on the domain $x\in(0, \ell) \subset \R$ and $t\ge 0$,
with homogeneous Dirichlet boundary conditions $u(0,t) = u(\ell,t) = 0$.
We are interested in initial conditions $u_0(x) = u(x,0) \in H^1_0(0,\ell)$ 
that are small in norm.
The equation~(\ref{eq:nlheat}) was studied by Sandstede and Scheel~\cite{SS05c}, 
who analyzed stability properties of the trivial solution $u\equiv 0$ 
on the finite domain $x\in (0,\ell)$ versus the infinite domains 
$x \in (0,\infty)$ and $x\in (-\infty,\infty)$.
(On finite domains, sufficiently small initial conditions $u_0$ give solutions 
$u(x,t)$ for which $\|u(\cdot,t)\|_{H^1_0(0,\ell)} \to 0$ as $t\to\infty$;
in contrast, $u\equiv 0$ is unstable on the infinite domains $x\in(0,\infty)$ and
$x\in (-\infty,\infty)$.  This discrepancy suggests that moderately small values of 
$u_0$ can exhibit interesting behavior on finite domains.
Galkowski generalized this model, drawing a connection between the dynamics 
and pseudospectra of the linear part of the model~\cite{Gal12}.)

We take a domain of length $\ell=30$ and discretize the system~(\ref{eq:nlheat})
using a Chebyshev pseudospectral collocation method, based on codes and techniques
from Trefethen~\cite{Tre00}.
Figure~\ref{fig:unstable2} shows the evolution of $\|u(\cdot,t)\|_{L^2(0,\ell)}$ 
for the initial condition
\[ u_0(x,t) = 10^{-5}@x(x-\ell)(x-\ell/2),\]
along with the solution of the analogous linear problem that omits the $u^3$ term
in~(\ref{eq:nlheat}).
The linear model is stable, but experiences transient growth; this growth
gradually increases the contribution of the $u^3$ term in the nonlinear model,
leading to apparent divergence.
Figure~\ref{fig:unstable2psa} shows $\sigma_\eps(\BA)$ (in the $L^2(0,\ell)$ norm)
for this discretized, linearized operator of order $n=127$.
To obtain a ROM for this linearized part of the problem, we transform coordinates
so the vector Euclidean norm approximates the $L^2(0,\ell)$ norm, 
then compute the associated Krylov orthogonal projection ROM.\ \ 
The resulting order $k=40$ ROM has a complex conjugate pair of 
unstable eigenvalues.
Following the stabilization procedure described in Example~\ref{ex:boeing}, 
one iteration of the restarted Arnoldi method with a filtered starting vector
yields a stabilized model.
As with Example~\ref{ex:boeing}, the unstable ROM does a better job of capturing the 
transient growth of the linear system.  While the unstable ROM does not qualitatively
match the asymptotic behavior of the \emph{linear system}, it is more consistent with 
the apparent divergence of the true \emph{nonlinear system}.

\end{example}
%%%%%%%%%%%%%%%%%%%%%%%%%%%%%%%%%%%%%%%%%%%%%%%%%%%%%%%%%%%%%%%%%%%%%%%%%%%%%%%%

\vspace*{-12pt}

\section{Conclusion}
This note has collected a number of results that give insight into the 
unstable modes that can arise in projection-based reduced-order models.
The illustrative examples have been intentionally small in scale to make
simple points, but the implications for large-scale systems are evident.
While the bounds in Theorems~\ref{thm:carden} and~\ref{thm:carden2} 
limit the location and number of unstable modes, these bounds can 
undoubtedly be sharpened with further analysis.
%%%%%%%%%%%%%%%%%%%%%%%%%%%%%%%%%%%%%%%%%%%%%%%%%%%%%%%%%%%%%%%%%%%%%%%%%%%%%%%%
\vspace*{-7pt}
\section*{Acknowledgments}

I thank Jurjen Duintjer Tebbens, Serkan Gugercin, 
and Dan Sorensen for helpful conversations,
% about stability properties of reduced-order models and orthogonal projection methods, 
and Laurent Demanet for introducing me to the
nonlinear heat model in Example~\ref{ex:nlheat}.
I appreciate a referee's insightful suggestions.

%%%%%%%%%%%%%%%%%%%%%%%%%%%%%%%%%%%%%%%%%%%%%%%%%%%%%%%%%%%%%%%%%%%%%%%%%%%%%%%%
%\vspace*{-7pt}
%%%%%%%%%%%%%%%%%%%%%%%%%%%%%%%%%%%%%%%%%%%%%%%%%%%%%%%%%%%%%%%%%%%%%%%%%%%%%%%%

% \section*{References}

\end{document}